\newtheorem{theorem}{Theorem}
\newtheorem{corollary}[theorem]{Corollary}
\newtheorem{lemma}[theorem]{Lemma}
\newtheorem{remark}[theorem]{Remark}
\def\d{\mathrm{d}}
\title {Smoothness of scale functions  for spectrally negative
L\'evy processes}
\author{T. Chan\footnote{Actuarial Mathematics and Statistics
School of Mathematical and Computer Sciences
Heriot-Watt University
Edinburgh EH14 4AS
UK. Email: t.chan@ma.hw.ac.uk},  A. E. Kyprianou\footnote{Department of Mathematical Sciences
The University of Bath
Claverton Down
Bath BA2 7AY
UK. Email: a.kyprianou@bath.ac.uk} and M. Savov\footnote{Laboratoire de Probabilites et Modeles Aleatoires,
Universite Paris 6; 4D10, 175, Rue de Chevaleret, 75013  Email: mladensavov@hotmail.com}}
\begin{document}

\maketitle

\begin{abstract}

Scale functions play a central role in the 
fluctuation theory of spectrally negative L\'evy processes and often appear in the context of martingale relations.
These relations are often complicated to establish requiring excursion theory
in favour of It\^o calculus. The reason for the latter is that standard It\^o calculus is only applicable to functions
with a sufficient degree of smoothness and knowledge of the precise degree of smoothness of scale functions is seemingly incomplete.
The aim of this article is to offer new results concerning properties of scale functions in relation to the smoothness of the underlying L\'evy measure. We place particular emphasis on spectrally negative L\'evy processes with a Gaussian component and processes of bounded variation.

An additional motivation is the very intimate relation of scale functions to renewal functions of subordinators. The results obtained for scale functions have direct implications offering new results concerning the smoothness of such renewal functions for which there seems to be very little 
existing literature on this topic.


\end{abstract}


\section{Spectrally negative L\'evy processes and scale functions}
Suppose that $X=\{X_t : t\geq 0\}$ is a spectrally negative L\'evy process with probabilities $%
\{P_x : x\in \mathbb{R}\}$. For convenience we shall write $P$ in place of $P_0$. That is to say a real valued stochastic process whose paths are almost surely right continuous with left limits 
and whose increments are stationary and independent.
Let   $\{\mathcal{F}_t : t\geq 0\}$ be the natural filtration 
satisfying the usual assumptions 
and denote by
 $\psi$ its Laplace exponent 
so that
\begin{equation*}
E(e^{\theta X_t})=e^{t\psi(\theta)}
\end{equation*}
where $E$ denotes expectation with respect to $P$.

It is well known that $\psi$ is finite for all $\theta\geq 0$, is strictly convex on $(0,\infty)$ and satisfies $\psi(0+)=0$ and $\psi(\infty)=\infty$. Further, from the L\'evy-Khintchin formula, it is known that 
\[
\psi(\theta) = a\theta + \frac{1}{2}\sigma^2\theta^2 + \int_{(-\infty,0)} (e^{\theta x} -1 - \theta x \mathbf{1}_{(x> - 1)})\Pi(\d x)
\]
where $a\in\mathbb{R}$, $\sigma^2\geq 0$ and $\Pi$ satisfies $\int_{(-\infty, 0)} (1\wedge x^2)\Pi(\d x)<\infty$ and is called the L\'evy measure. 


Suppose that for each $q\geq 0$, $\Phi(q)$ is the
largest root of the equation $\psi(\theta)=q$ (there are at most two). We
recall from Bertoin (1996, 1997) that for each $q\geq 0$ there exits a
function $W^{(q)}:\mathbb{R}\rightarrow [0,\infty)$, called the $q$-\textit{%
scale function} defined in such a way that $W^{(q)}(x)=0$ for all $x<0$ and
on $[0,\infty)$ its Laplace transform is given by
\begin{equation}
\int_0^\infty e^{-\theta x}W^{(q)}(x)\,\d x = \frac{1}{\psi(\theta)-q}\text{ for 
} \theta > \Phi(q).  \label{LapT}
\end{equation}
For convenience we shall write $W$ in place of $W^{(0)}$
and call this the {\it  scale function} rather than the $0$-scale function.

The importance of $q$-scale functions appears in a number of one and two sided
exit problems for (reflected) spectrally negative L\'evy processes. See for example
Zolotarev (1964), Tak\'acs (1967), Suprun (1976), Emery (1973), Rogers (1990), Bertoin (1996, 1997), Avram et al.
(2004), Doney (2005, 2007), Pistorius (2003, 2004, 2005) and 
Chiu and Yin (2005).
Notably however the $q$-scale function takes its name from the identity
\begin{equation}
E_x (e^{-q\tau_a^+}\mathbf{1}_{(\tau^+_a < \tau^-_0)}) =\frac{W^{(q)}(x)}{%
W^{(q)}(a)}  \label{scale}
\end{equation}
where $\tau^+_a =\inf\{t>0: X_t >a\}$,  $\tau_0^- = \inf\{ t>0 : X_t <0\}$ and $E_x$ is expectation with respect to $P_x$. The latter identity provides an analogue to the situation
for scale functions of diffusions.

Set $q\geq 0$. Under the exponential change of measure
\begin{equation*}
\left. \frac{\d P_x^{\Phi(q)}}{\d P_x}\right|_{\mathcal{F}_{t}}
=e^{\Phi(q)(X_t-x)-q t} 
\end{equation*}
it is well known that
$(X,P^{\Phi(q)})$ is again a spectrally negative L\'evy process
whose Laplace exponent is given by 
\begin{equation}
\psi_{\Phi(q)}(\theta) =\psi(\theta + \Phi(q)) -q  \label{LapPhi}
\end{equation}
for $\theta \geq - \Phi(q)$ and whose L\'evy measure, $\Pi_{\Phi(q)}$, satisfies $$\Pi_{\Phi(q)}(dx) = e^{\Phi(q)x}\Pi(dx)$$ on $(-\infty, 0).$ Note in particular that $\psi_{\Phi(q)\prime}(0+) = \psi'(\Phi(q))$ which is strictly positive when $q>0$ or when $q=0$ and $\psi'(0+)<0$. Using the latter change of measure, we may deduce from (\ref{LapT}) and (\ref{LapPhi}) that
\begin{equation}
W^{(q)}(x)=e^{\Phi(q)x}W_{\Phi(q)}(x) \label{reduce}
\end{equation}
where $W_{\Phi(q)}(x)$ is the scale function for the process $(X, P^{\Phi(q)})$.

There exists an excursion theory argument
given in Bertoin (1996) from which it is known that for any $0<x<\infty$, 
\begin{equation}  \label{integralrep}
W(x)=W(a)\exp\left\{-\int_x^a n(\overline\epsilon > t) \d t \right\} 
\end{equation}
for any arbitrary $a>x$
where $n$ is the excursion measure of the local-time-indexed process of
excursion heights $\{{\overline\epsilon}_t : t\geq 0\}$ of the reflected L\'evy process $\{\sup_{s\leq t}X_s - X_t : t\geq 0\}$. Note that when  $\psi'(0+)>0$ the scale function may also be represented in the form $W(x) = P_x(\inf_{s\leq t}X_s \geq 0)/\psi'(0+)$.
From (\ref{integralrep})  it is immediate that
on $(0,\infty)$ the function $W$ is monotone and almost everywhere differentiable with left and right derivatives given by
\begin{equation}  \label{W-dash}
W^{\prime}(x-)=n(\overline\epsilon \geq x)W(x) \text{ and }
W^{\prime}(x+) =n(\overline\epsilon > x)W(x)
\end{equation}
so that $W$ is continuously differentiable  on $(0,\infty)$ if and only if $n({\overline\epsilon} =t)=0$
for all $t>0$ in which case $W^{\prime}(x)  = n(\overline{\epsilon}\geq x)W(x)$.
In Lambert (2000) it is shown that the latter is the case if for example the process $X$ has
paths of unbounded variation (and in particular if it possesses a Gaussian
component) or if $X$ has paths of bounded variation and the L\'evy measure
is absolutely continuous with respect to Lebesgue measure. More recently, it has been shown in Doney (2007) and Kyprianou et al. (2008) that, in the case of bounded variation paths, $W$ is continuously differentiable if and only if $\Pi$ has no atoms.

The principle objective of this paper is to investigate further the smoothness properties of scale functions. In particular we are interested in providing conditions on the L\'evy measure $\Pi$ and Gaussian coefficient $\sigma$ such that, for all $q\geq 0$, the restriction of $W^{(q)}$ to $(0,\infty)$ belongs to $C^k(0,\infty)$ for $k=2,3,\cdots$.   (Henceforth we shall write the latter with the slight abuse of notation $W^{(q)}\in C^k(0,\infty)$).

\section{Motivation and main results}\label{motivation}

Before moving to the main results, let us motivate further the reason for studying the smoothness of scale functions. From (\ref{scale}) it is straightforward to deduce by applying the Strong Markov Property that for $a>0$ 
\[
\{e^{-qt}W^{(q)}(X_t):   t<\tau^+_a \wedge \tau^-_0\}
\]
is a martingale. In the spirit of the theory of stochastic representation associated with one dimensional diffusions, this fact may in principle be used to solve certain boundary value problems by affirming that $W^{(q)}$ solves the integro-differential equation 
\[
(\Gamma -q)W^{(q)}(x)=0 \text{ on }(0,a)
\]
where $\Gamma$ is the infinitesimal generator of $X$.

The latter equation would follow by an application of It\^o's formula to the aforementioned martingale providing that one can first assert sufficient smoothness of $W^{(q)}$. In general, a comfortable sufficient condition to do this would be that $W^{(q)}\in C^2(0,\infty)$, although this is not strictly necessary. For example, when $X$ is of bounded variation, knowing that $W^{(q)}\in C^1(0,\infty)$ would suffice.


In general it is clear from (\ref{LapT}) that imposing conditions on the L\'evy triple $(a,\sigma,\Pi)$, in particular the quantities $\sigma$ and $\Pi$, is one way to force a required degree of smoothness on $W^{(q)}$. It is also possible to get some a priori feeling for what should be expected by way of results by looking briefly at the intimate connection between scale functions when $\psi'(0+)\geq  0 $ and  $q=0$ and renewal functions. 

Suppose that $X$ is such that $\psi'(0+)\geq 0$, then thanks to the Wiener-Hopf factorisation we may write $\psi(\theta) = \theta \phi(\theta)$ where $\phi$ is the Laplace exponent of the descedning ladder height subordinator. (See for example Chapter VI of Bertoin (1996) for definitions of some of these terms). Note that in this exposition, we understand a subordinator in the broader sense of a L\'evy process with non-decreasing paths which is possibly killed at an independent and exponentially distributed time. A simple integration by parts in (\ref{LapT}) shows that 
\begin{equation}
 \int_{[0,\infty)}e^{-\theta x}W({\rm d} x) = \frac{1}{\phi(\theta)}
 \label{remark-around}
\end{equation}
which in turn uniquely identifies $W({\rm d}x)$ as  the renewal measure associated with the descending ladder height subordinator. Recall that if $H=\{H_t: t\geq 0\}$ is the subordinator associated with the Laplace exponent $\phi$ then 
a very first view on smoothness properties could easily incorporate known facts concerning smoothness properties of renewal measures. 

For example, Hubalek and Kyprianou (2008) formalise several facts which are implicit in the Wiener-Hopf factorisation. Specifically it is shown that given any Laplace exponent of a subordinator, $\phi$, there exists a spectrally negative L\'evy process with Lapalce exponent $\psi(\theta) = \theta\phi(\theta)$ if and only if  the L\'evy measure associated to $\phi$ is absolutely continuous with non-increasing density. In that case the aforementioned density is necessarily equal to $$\overline{\Pi}(x):=\Pi(-\infty,-x).$$ A sub-class of such choices for $\phi$ are the so called complete Bernstein functions. That is to say, Laplace exponents of subordinators whose L\'evy measure is absolutely continuous with completely monotone density.
A convenience of this class of $\phi$ is that, from Theorem 2.3 of Rao et al. (2005) and Theorem 2.1 and Remark 2.2 of Song and Vondra\v{c}ek (2005), the potential density associated to $\phi$, that is to say $W'$, is completely monotone and hence  belongs to $C^\infty(0,\infty)$.
This way, one easily reaches the conclusion  that  whenever $\psi'(0+)\geq 0$ and $\overline{\Pi}(x)$ is completely monotone, then $W'$ is completely monotone. (See Kyprianou and Rivero (2008) for further results in this direction).

The above arguments show that a strong smoothness condition on the L\'evy measure $\Pi$, namely complete monotonicity of $\overline{\Pi}$, yields a strong degree of smoothness on the scale functions; at least for some particular parameter regimes. One would hope then that a weaker smoothness condition on the L\'evy measure $\Pi$, say $\overline{\Pi}\in C^n(0,\infty)$ for $n=1,2,...$, might serve as a suitable condition in order to induce a similar degree of smoothness for the associated scale functions.
Another suspicion one might have given the theory of scale functions for diffusions is that, irrespective of the jump structure,  the presence of a Gaussian component is always enough to guarantee that the scale functions are $C^2(0,\infty)$.

\bigskip

 Let us now turn to our main results which do indeed reflect these intuitions. We deal first with the case of when a Gaussian component is present.




\begin{theorem}\label{I}
Suppose that $X$ has a Gaussian component. For each fixed $q\geq
0$ the function $W^{(q)}$ belongs to the class $C^{2}(0,\infty)$.
\end{theorem}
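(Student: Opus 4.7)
The strategy is to reduce to a tractable case via the Esscher transform, identify $W$ with the renewal density of the descending ladder height subordinator, derive a renewal equation for $u := W'$, and exploit the smoothing effect of iterating the convolution.

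First, using (\ref{reduce}) and the fact that $e^{\Phi(q)\cdot}\in C^\infty$, the claim $W^{(q)}\in C^2(0,\infty)$ is equivalent to $W_{\Phi(q)}\in C^2(0,\infty)$. The Esscher transform preserves the Gaussian coefficient $\sigma$, and whenever $\Phi(q) > 0$ one has $\psi'_{\Phi(q)}(0+) = \psi'(\Phi(q)) > 0$; the residual case $\Phi(q) = 0$ forces $q = 0$ and $\psi'(0+) \geq 0$ and requires no reduction. So it suffices to prove the result under the hypothesis $q = 0$ and $\psi'(0+) \geq 0$.

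Under this reduction, the Wiener-Hopf factorisation $\psi(\theta) = \theta\phi(\theta)$ combined with a direct manipulation of the L\'evy-Khintchin formula gives
\[
\phi(\theta) = \psi'(0+) + \frac{\sigma^2}{2}\theta + \int_0^\infty (1-e^{-\theta y})\overline{\Pi}(y)\,\d y,
\]
identifying $\phi$ with the Laplace exponent of a subordinator with strictly positive drift $\sigma^2/2$ and L\'evy measure having density $\overline{\Pi}$. By the classical theory of subordinators with positive drift, $W(\d x)$ admits a bounded continuous density $u := W'$ on $[0,\infty)$ satisfying $u(0+) = 2/\sigma^2$, and inverting $\phi(\theta)\hat u(\theta) = 1$ yields the renewal equation
\[
\frac{\sigma^2}{2} u(x) + \psi'(0+) W(x) + \int_0^x u(x-y)F(y)\,\d y = 1,\qquad x>0,
\]
where $F(y) := \int_y^\infty \overline{\Pi}(s)\,\d s$ is continuous on $(0,\infty)$, locally integrable at the origin, and satisfies $yF(y) \to 0$ as $y\downarrow 0$, although $F(0+)$ itself may be infinite.

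The crux is now to show $u\in C^1(0,\infty)$. A direct differentiation of the convolution $u*F$ produces a boundary term $u(x)F(0+)$ that can diverge. To circumvent this I would iterate the renewal equation once --- substituting the expression for $u$ into the convolution on the right --- to obtain
\[
u(x) = A(x) + \Big(\frac{2}{\sigma^2}\Big)^{2}\,(u * F * F)(x),
\]
where $A$ is an explicit $C^1(0,\infty)$ combination of $W$, $\int_0^\cdot F(s)\,\d s$ and $W*F$. The iterated kernel $K := F*F$ satisfies $K(0) = 0$ and is absolutely continuous on compact subsets of $[0,\infty)$ with weak derivative $K' \in L^1_{\mathrm{loc}}$. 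Consequently, applying Leibniz to the symmetric form $(u*K)(x) = \int_0^x K(y)u(x-y)\,\d y$ yields $(u*K)'(x) = K(0)u(x) + (K'*u)(x) = (K'*u)(x)$, which is continuous in $x$ by a standard dominated-convergence argument. Therefore $u \in C^1(0,\infty)$, equivalently $W\in C^2(0,\infty)$.

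The principal obstacle is establishing the regularity of the iterated kernel $K = F*F$, namely that $K(0) = 0$ and $K$ is absolutely continuous with integrable weak derivative near the origin, despite a possible singularity of $F$ itself. The key is an internal cancellation in the formal derivative $K'(s) = F(s/2)^2 - 2\int_0^{s/2} F(y)\overline{\Pi}(s-y)\,\d y$, together with the bound $yF(y) \to 0$ that the positive drift $\sigma^2/2$ of the descending ladder height subordinator --- and ultimately the Gaussian component --- makes available.
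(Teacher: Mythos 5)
Your setup coincides with the paper's first proof of Theorem \ref{I}: the Esscher reduction to $q=0$, $\psi'(0+)\geq 0$, the identification of $W$ with the renewal function of the descending ladder height subordinator with drift $\sigma^{2}/2$ and L\'evy density $\overline{\Pi}$, and your renewal equation for $u=W'$ is exactly (\ref{Drift}). The gap is at what you yourself call the crux: the regularity you claim for the iterated kernel $K=F*F$, with $F=\overline{\overline{\Pi}}$, does not follow from the available information ($F$ decreasing, $F\in L^{1}_{\rm loc}$, $xF(x)\to 0$) and is in fact false for admissible L\'evy measures. Take $\overline{\Pi}(y)\asymp y^{-2}(\log(1/y))^{-2}$ near the origin; this is a legitimate L\'evy measure since $\int_{0}^{1}y\,\overline{\Pi}(y)\,{\rm d}y<\infty$ (its Blumenthal--Getoor index equals $2$, so it is covered by Theorem \ref{I} but not by Theorem \ref{II}). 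Then $F(y)\asymp y^{-1}(\log(1/y))^{-2}$ and, since $F$ is decreasing,
\[
K(x)=\int_{0}^{x}F(x-y)F(y)\,{\rm d}y\;\geq\;2F(x)\int_{0}^{x/2}F(y)\,{\rm d}y\;\asymp\;\frac{1}{x(\log(1/x))^{3}}\;\longrightarrow\;\infty
\]
as $x\downarrow 0$. Hence $K(0+)=\infty$, $K$ is not absolutely continuous up to the origin, $K'\notin L^{1}$ near $0$, and the Leibniz identity $(u*K)'=K(0)u+K'*u$ on which your conclusion rests is unavailable; the hoped-for ``internal cancellation'' in $K'(s)=F(s/2)^{2}-2\int_{0}^{s/2}F(y)\overline{\Pi}(s-y)\,{\rm d}y$ does not occur (the second term dominates the first by a logarithm in this example). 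Moreover, no fixed number of iterations rescues the scheme: choosing slower slowly varying corrections in place of $(\log)^{2}$ produces admissible $\Pi$, still with a Gaussian component, for which every fixed convolution power $F^{*m}$ blows up at the origin. (Even under the hypotheses of Theorem \ref{II}, where $F(y)\leq Cy^{1-\vartheta}$ with $\vartheta<2$, a single iteration fails once $\vartheta>3/2$, although there finitely many iterations do suffice.)

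This is precisely the difficulty that the paper's Theorem \ref{Sub} is designed to overcome, and it is why the argument there never asks any finite convolution power of the kernel to be regular at $0$: writing $f=\sigma^{2}W'/2$ and $g'=-2\sigma^{-2}\overline{\overline{\Pi}}-2\sigma^{-2}\psi'(0+)$, it represents $f$ as the full series $\sum_{n\geq 0}g^{*n}$ and proves $f'=\sum_{n\geq 0}g^{*n\prime}$ is continuous by establishing uniform convergence only on intervals $(\epsilon,a)$ bounded away from the origin, using the single bound $|g'(x)|\leq C(a)/x$ (your $xF(x)\to 0$) together with inductive estimates of the type $|g^{*n\prime}(x)|\leq n|g(x/2)|^{n-1}|g'(x/2^{n-1})|$. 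To complete your argument you must either prove and invoke a statement of this kind for the whole series, or switch to an altogether different route such as the paper's second, probabilistic proof, which computes $W''(x\pm)$ through the excursion quantity $n(\overline{\epsilon}\geq x,\ \epsilon_{\sigma_{x}}=x)$ and the fact that a spectrally negative process creeps downwards precisely when $\sigma>0$. As written, the step asserting $K(0)=0$ and absolute continuity of $K$ with locally integrable derivative is a genuine gap.
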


 \begin{theorem}\label{II}
 Suppose that $X$ has a Gaussian component and its Blumenthal-Getoor index belongs to  $[0,2)$, that is to say
 \[
 \inf\{\beta\geq0: \int_{|x|<1} |x|^\beta \Pi({\rm d}x)<\infty\}\in[0,2).
 \]
 Then for each $q\geq 0$ and $n=0,1,2,...$, $%
W^{(q)}\in C^{n+3}(0,\infty)$ if and only if $\overline{\Pi}\in C^{n}(0,\infty)$.
\end{theorem}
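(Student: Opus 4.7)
The plan is to reduce to the case $q=0$ using (\ref{reduce}): the factor $e^{\Phi(q)x}$ is $C^\infty$, and the change of measure multiplies $\Pi(\d y)$ by the smooth weight $e^{\Phi(q)y}$, so $\overline{\Pi}_{\Phi(q)}\in C^n$ iff $\overline{\Pi}\in C^n$ and likewise for $W^{(q)}$ and $W_{\Phi(q)}$. Theorem \ref{I} supplies $W\in C^2(0,\infty)$, which is just enough smoothness to apply It\^o's formula to the martingale in (\ref{scale}) and derive the integro-differential equation
\[
\frac{\sigma^2}{2}W''(x)+aW'(x)+I(x)=0,\qquad x>0,
\]
where $I(x)=\int_{(-\infty,0)}[W(x+y)-W(x)-yW'(x)\mathbf{1}_{|y|<1}]\Pi(\d y)$. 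The Gaussian coefficient on $W''$ is the key structural feature: it will let us solve for $W''$ at each stage and recover two orders of smoothness beyond whatever is proven for $I$.

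The next step is to transform $I$ by integration by parts to expose its dependence on $\overline{\Pi}$ explicitly. Setting $s=-y$, splitting at $s=1$, and performing IBP separately against the non-increasing tail $\overline{\Pi}$, one obtains, for $x>1$,
\[
I(x)=\int_0^1[W'(x)-W'(x-u)][\overline{\Pi}(u)-\overline{\Pi}(1)]\,\d u+[W(x-1)-W(x)]\overline{\Pi}(1)-\int_1^x W'(x-s)\overline{\Pi}(s)\,\d s,
\]
with the obvious variant involving a cutoff $\delta<x$ in place of $1$ when $x\in(0,1]$. The first integral is absolutely convergent precisely because the Blumenthal--Getoor index $\beta$ is strictly less than $2$: near $u=0$ the integrand is $O(u^{1-\beta})$, and $\int_0^1 u^{1-\beta}\,\d u<\infty$.

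For the forward implication I would induct on $n$, with $W\in C^{n+2}$ as hypothesis. Given $\overline{\Pi}\in C^n$, the last integral differentiates cleanly by Leibniz's rule, yielding $W'(0+)\overline{\Pi}(x)+\int_1^x W''(x-s)\overline{\Pi}(s)\,\d s$, where $W'(0+)=2/\sigma^2$ thanks to the Gaussian hypothesis; iterating up to $n+1$ derivatives produces boundary contributions involving $\overline{\Pi}^{(j)}(x)$ together with deeper convolutions of $W^{(k)}$ against $\overline{\Pi}$, all continuous under the hypotheses. The compensated first integral is handled similarly, its $x$-derivative being $\int_0^1[W''(x)-W''(x-u)][\overline{\Pi}(u)-\overline{\Pi}(1)]\,\d u$, which converges because the cancellation $W''(x)-W''(x-u)\to 0$ as $u\to 0$ combines with the BG bound $\overline{\Pi}(u)-\overline{\Pi}(1)=O(u^{-\beta})$ and the strict inequality $\beta<2$ to render the integrand absolutely integrable. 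Thus $I\in C^{n+1}$, and the IDE yields $W\in C^{n+3}$.

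The converse inverts the same computation: from $W\in C^{n+3}$ and the IDE we have $I\in C^{n+1}$, and differentiating the integrated representation of $I$ a total of $n$ times isolates a continuous combination whose leading contribution is the Gaussian boundary term $(2/\sigma^2)\overline{\Pi}^{(n)}(x)$, whence $\overline{\Pi}\in C^n$. The principal obstacle across both directions is the regime $\beta\in[1,2)$: here $\overline{\Pi}$ is non-integrable near zero and naive decompositions such as $I(x)=bW'(x)-\int_0^x W'(x-u)\overline{\Pi}(u)\,\d u$ are meaningless because $b=\int_{(-1,0)}(-y)\Pi(\d y)=\infty$. One must retain the compensated kernel $\overline{\Pi}-\overline{\Pi}(1)$ throughout, and most likely establish an auxiliary H\"older-type modulus of continuity for $W''$ (by bootstrapping inside the IDE) in order to control the cross-term $[W''(x)-W''(x-u)][\overline{\Pi}(u)-\overline{\Pi}(1)]$ at each inductive step.
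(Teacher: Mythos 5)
Your structural idea (the Gaussian coefficient lets you solve the integro-differential equation for $W''$ and bootstrap) is plausible, but the forward implication has a genuine gap at exactly the point you flag and then defer. At the start of the induction you only know $W\in C^{2}(0,\infty)$ from Theorem \ref{I}, and to get $I\in C^{1}$ you must justify
\[
\frac{{\rm d}}{{\rm d}x}\int_0^1\big[W'(x)-W'(x-u)\big]\big[\overline{\Pi}(u)-\overline{\Pi}(1)\big]\,{\rm d}u
=\int_0^1\big[W''(x)-W''(x-u)\big]\big[\overline{\Pi}(u)-\overline{\Pi}(1)\big]\,{\rm d}u .
\]
Mere continuity of $W''$ gives only $W''(x)-W''(x-u)=o(1)$, which against $\overline{\Pi}(u)=O(u^{-\vartheta})$ with $\vartheta\in(1,2)$ is not integrable near $u=0$; nor is there a dominating function for the difference quotients (they are bounded by $C\min(1,u/h)$, and $\int_0^1\min(1,u/h)u^{-\vartheta}{\rm d}u\sim h^{1-\vartheta}\to\infty$). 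What you need is a H\"older modulus $|W''(x)-W''(x-u)|\leq Cu^{\gamma}$ with $\gamma>\vartheta-1$, i.e.\ nearly Lipschitz $W''$ when $\vartheta$ is close to $2$ --- which is essentially the $C^{3}$ statement being proved. Your closing remark (``most likely establish an auxiliary H\"older-type modulus \ldots by bootstrapping inside the IDE'') is precisely the missing analytic content, it is not supplied, and the same obstruction recurs at every inductive stage for $W^{(j+1)}$. The converse direction fares better, since there $W\in C^{n+3}$ is assumed and provides the required Lipschitz bounds; note also that deriving the IDE by It\^o from regularity on the open half-line only, with $W\equiv 0$ on $(-\infty,0]$ and a kink at $0$, itself needs more care than a one-line appeal --- this delicacy is part of the paper's motivation.

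The paper circumvents the problem entirely by never differentiating against the raw tail: via Kesten's creeping/crossing identity for the descending ladder height subordinator it obtains (\ref{Drift}), i.e.\ a renewal equation $f=1+f*g$ with $f=\sigma^{2}W'/2$ and $g$ a \emph{second} antiderivative of $\overline{\Pi}$ (plus a linear term), so that $|g'|\sim\overline{\overline{\Pi}}=O(x^{1-\vartheta})$ and $|g''|\sim\overline{\Pi}=O(x^{-\vartheta})$ with $\vartheta-1<1$; the two-sided smoothness transfer ($f\in C^{k+2}$ iff $g\in C^{k+2}$) is then exactly Theorem \ref{Renewal}, whose proof performs the delicate convolution-series estimates once and for all. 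If you wish to keep the IDE route you must supply the fractional-regularity bootstrap yourself; as written, the forward implication does not go through when the Blumenthal--Getoor index lies in $[1,2)$.
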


\noindent Next we have a result concerning the case of bounded variation paths.

 \begin{theorem}\label{III}
 Suppose that $X$ has paths of bounded variation and its  tail has a derivative $\pi(x)$, such that   $\pi(x)\leq Cx^{-1-\alpha}$ in the neighbourhood of the origin, for some $\alpha<1$ and $C>0$. Then for each $q\geq 0$ and $n=1,2,...$, $%
W^{(q)}\in C^{n+1}(0,\infty)$ if and only if $\overline{\Pi}\in C^{n}(0,\infty)$.\end{theorem}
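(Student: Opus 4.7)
First I would reduce to $q=0$ via the identity $W^{(q)}(x) = e^{\Phi(q)x} W_{\Phi(q)}(x)$ from (\ref{reduce}): since $e^{\Phi(q)x}$ is smooth and the tilted tail $\overline{\Pi_{\Phi(q)}}(x) = \int_x^\infty e^{-\Phi(q)s}\pi(s)\,\d s$ retains both the $C^n(0,\infty)$ class of $\overline{\Pi}$ and the pointwise bound $\pi_{\Phi(q)}(x) \le \pi(x)$ near the origin, it suffices to establish the equivalence for $W = W^{(0)}$. In the bounded variation setting one has $\psi(\theta) = \delta \theta - \theta \int_0^\infty e^{-\theta x} \overline{\Pi}(x)\,\d x$, so inverting the Laplace transform (\ref{LapT}) yields the Volterra equation
\begin{equation*}
\delta W(x) = 1 + \int_0^x \overline{\Pi}(y) W(x-y)\,\d y, \qquad x \ge 0,
\end{equation*}
with $W(0)=1/\delta$. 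Since the presence of the density $\pi$ excludes atoms in $\Pi$, the results of Doney (2007) and Kyprianou et al.\ (2008) quoted in the excerpt give $W \in C^1(0,\infty)$ for free, and differentiating the Volterra equation once produces
\begin{equation*}
\delta W'(x) = W(0)\overline{\Pi}(x) + \int_0^x \overline{\Pi}(y) W'(x-y)\,\d y, \qquad x > 0.
\end{equation*}
From this relation one reads off $W'(x) \sim \overline{\Pi}(x)/\delta^2$ as $x \to 0+$, and the hypothesis $\pi(x)\le Cx^{-1-\alpha}$ with $\alpha<1$ guarantees that $\overline{\Pi}$ and $W'$ both lie in $L^1_{\mathrm{loc}}[0,\infty)$.

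The central technical ingredient is the following convolution smoothness lemma: if $f,g \in C^n(0,\infty) \cap L^1_{\mathrm{loc}}[0,\infty)$, then their Volterra convolution $f*g$ belongs to $C^n(0,\infty)$. One proves this by writing $(f*g)(x) = \int_0^{x/2} f(y)g(x-y)\,\d y + \int_0^{x/2} f(x-u)g(u)\,\d u$ so that in each piece the factor depending on $x$ is evaluated on a region bounded away from $0$ where it is $C^n$; differentiation under the integral sign then works, with the $C^n$ boundary contribution at $x/2$ causing no trouble. For the forward implication ($\overline{\Pi} \in C^n \Rightarrow W^{(q)} \in C^{n+1}$) I iterate the equation for $W'$ to obtain the Neumann series
\begin{equation*}
W'(x) = \sum_{m \ge 1} \delta^{-m-1} \overline{\Pi}^{*m}(x),
\end{equation*}
show by induction together with the lemma that each term belongs to $C^n(0,\infty)$, and verify that the series converges locally uniformly in the $C^n$-topology on compact subsets of $(0,\infty)$ using the standard Volterra bound $\|\overline{\Pi}^{*m}\|_{L^1[0,b]}\le (\int_0^b \overline{\Pi})^m$ combined with the local boundedness of $\overline{\Pi}^{*m}$ away from $0$ for $m\ge 2$.

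For the converse, given $W\in C^{n+1}(0,\infty)$ I set $k:=W'/W(0)\in C^n(0,\infty)$; the equation for $W'$ rearranges as $\overline{\Pi}+\overline{\Pi}*k = \delta k$, and formal iteration produces
\begin{equation*}
\overline{\Pi}(x) = \delta \sum_{m\ge 1} (-1)^{m-1} k^{*m}(x).
\end{equation*}
By the same lemma every convolution power $k^{*m}$ lies in $C^n(0,\infty)$, and convergence of this alternating series in the $C^n$-topology then delivers $\overline{\Pi}\in C^n(0,\infty)$. I expect the main obstacle to lie precisely in justifying the convergence of these Neumann series in $C^n$-norm on compact subsets of $(0,\infty)$: both the singularities of $\overline{\Pi}$ and of $W'$ at the origin and the alternating signs in the converse representation call for careful Volterra-type estimates, and this is where the restriction $\alpha<1$ in the hypothesis plays its essential quantitative role.
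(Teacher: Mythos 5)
Your skeleton is essentially the paper's: reduce to $q=0$ via (\ref{reduce}) (the paper additionally reduces to $\psi'(0+)\geq 0$), pass to the bounded variation form (\ref{bvLE}) of $\psi$, obtain the Volterra/renewal equation $\mathrm{c}W = 1 + \overline{\Pi}\!*\!(\mathrm{c}W)$ equivalent to $f=1+f*g$ with $g'=\overline{\Pi}/\mathrm{c}$, represent $W$ (equivalently $W'$) by the Neumann series of convolution powers of $\overline{\Pi}$, and for the converse invert the renewal equation into the alternating series $g=(f-1)+\sum_{n\geq 1}(f-1)*(1-f)^{*n}$, which is exactly your identity $\overline{\Pi}=\delta\sum_{m\geq 1}(-1)^{m-1}k^{*m}$ with $k=\delta W'=f'$. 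Your convolution smoothness lemma (split at a fixed $\epsilon$, differentiate under the integral, collect boundary terms) is also sound and is the same device the paper uses inside (\ref{sum}).

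However, there is a genuine gap, and it sits precisely where you yourself say you expect the main obstacle: the convergence of the differentiated Neumann series in the $C^{n}$-topology on compacts of $(0,\infty)$, which in the paper is the entire content of Theorem \ref{Renewal} and is where the hypothesis $\pi(x)\leq Cx^{-1-\alpha}$, $\alpha<1$, actually gets used. The tools you name --- the $L^{1}$ Volterra bound $\|\overline{\Pi}^{*m}\|_{L^1[0,b]}\leq(\int_0^b\overline{\Pi})^m$ and local boundedness of $\overline{\Pi}^{*m}$ away from $0$ --- control the functions, not their derivatives of order up to $n$; and the naive recursion $\overline{\Pi}^{*m}=\overline{\Pi}*\overline{\Pi}^{*(m-1)}$ forces the interval on which you must control the $n$-th derivative of $\overline{\Pi}^{*(m-1)}$ to creep toward the origin as $m$ grows, so no bound uniform in $m$ on a fixed compact $[\epsilon,a]$ comes out of it. The paper's resolution is not routine: it proves the quantitative estimates $|g^{*m\prime}(x)|\leq C_m(a)x^{m-1-m\alpha}$ (see (\ref{Estimate1})) and $|g^{*k\prime\prime}(x)|\leq C_k(a)x^{k-2-k\alpha}$ (see (\ref{NEW1})), uses them to pick a \emph{fixed} $k$ with $g^{*(2k)\prime\prime}(0+)=0$, writes $g^{*(m+2k)}=g^{*m}*g^{*(2k)}$ and decomposes as in (\ref{sum}) so that the top-order derivative always lands on the fixed factor $g^{*(2k)}$ while only derivatives of order $\leq l$ of $g^{*m}$ (summable by an induction on the derivative order) appear under the sum over $m$; and for the converse it must first establish the a priori bounds $|f'(x)|\leq D(a)x^{-\alpha}$ and $|f''(x)|\leq D(a)x^{-\alpha-1}$ near $0$ before the same machinery can be applied to your alternating series (whose convergence, with derivatives, likewise does not follow from sign alternation or $L^1$ bounds alone). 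Without supplying an argument of this type, the forward and converse implications both remain unproven; everything you have written before that point is correct but is the easy part of the proof.
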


\begin{remark}\rm
Note that if $\overline\Pi(0)<\infty$ (that is to say $X$ has a compound Poisson jump structure), then the Blumenthal-Getoor index is zero. As a consequence Theorem \ref{II} implies, without further restriction on $\Pi$, that, if in addition a Gaussian component is present in $X$, then, for $n=0,1,2,\cdots$,  $W^{(q)}\in C^{n+3}(0,\infty)$ if and only if $\overline{\Pi}\in C^{n}(0,\infty)$.
\end{remark}

\begin{remark}\rm
In Kyprianou et al. (2008) related results concerning the case where the underlying L\'evy process has a Gaussian component exist. In particular, it was shown that if $\overline{\Pi}$ is log-convex and absolutely continuous with respect to Lebesgue measure, then for all $q\geq 0$, $W^{(q)\prime}$ is ultimately convex and $W^{(q)}\in C^2(0,\infty)$. Also as an elaboration on some of the discussion in Section \ref{motivation}, it was shown in Loeffen (2008) that if $\overline{\Pi}$ has a completely monotone density, then for all $q\geq 0$, $W^{(q)\prime}$ is convex and $W^{(q)}\in C^\infty (0,\infty)$.
\end{remark}

\noindent The various methods of proof we shall appeal to in order to establish the above three theorems reveals that the case that $X$ has paths of unbounded variation but no Gaussian component is a much more difficult case to handle and unfortunately we are not able to offer any concrete statements in this regime. 

\bigskip

We conclude this section with a brief summary of the remainder of the paper. In the next section we shall look at some associated results  which concern smoothness properties of a certain family of renewal measures. These results will form the basis of one of two key techniques used in the proofs. Moreover, this analysis will also lead to a new result, extending a classical result of Kesten (1969), concerning smoothness properties of renewal measures of subordinators with drift.  

We then turn to the proofs of our results on renewal equations and finally use them, together with other probabilistic techniques, to prove the results on scale functions.




\section{Renewal equations}
Henceforth the convolution of two given functions, $f$ and $g$ mapping $[0,\infty)$ to $\mathbb{R}$, will be defined as 
\[f*g(x)=\int_{0}^{x}f(x-y){\rm d}g(y).\]
In all of the subsequent analysis it will always be the case $g$ is absolutely continuous with respect to Lebesgue measure  which suffices for the correctness of the above definition. When appropriate, we shall also understand $g^{*n}$ to be the $n$-fold convolution of $g$ on $(0,\infty)$ where $n=0,1,\cdots$. In particular, $g^{*0}({\rm d}x) = \delta_0({\rm d}x)$, $g^{*1} = g$ and for $n=2,3,\cdots$,
\[
g^{*n}(x)  = \int_{0}^{x}g^{*(n-1)}(x-y)g'(y){\rm d}y
\] 
on $(0,\infty)$. 

We also note that in the following theorems, the issue of uniqueness of solutions to the renewal equation is already well understood. However we include the statement and proof of uniqueness for completeness.

\begin{theorem}\label{Sub}
Let $g$ be a negative, decreasing function on $(0,\infty)$ with $g(0)=0$ and $|g'(x)|$ is continuous and decreasing such that $g'(\infty)=0$. Moreover assume that on every interval $(0,a)$ there exists a constant $C(a)$ such that $|g'(x)|\leq C(a)/x$. Then the solution of the renewal equation
\begin{equation}\label{renewal equation}
f(x)=1+f*g(x)
\end{equation}
is unique in the class of functions which are bounded on bounded intervals and has the usual form \[f(x)=\sum_{n\geq 0}g^{*n}(x).
\] 
Moreover, the first derivative $f'$ exists, $f'(x)=\sum_{n\geq0}g^{*n\prime}(x)$ and $f'$ is continuous.  
\end{theorem}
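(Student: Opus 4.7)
The plan is to establish the theorem via a Neumann-series / contraction argument, first for $f$ itself and then, after formally differentiating the renewal equation, for $f'$. The crucial integrability property used throughout is that the positive Stieltjes measure $-\d g$ has finite total mass $|g(a)|$ on every compact interval $[0,a]$, which follows from $g(0)=0$ and $g$ being real-valued on $(0,\infty)$.

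For existence and uniqueness I would work on an arbitrary $[0,a]$ in the weighted supremum norm $\|h\|_{\lambda,a}=\sup_{x\in[0,a]}e^{-\lambda x}|h(x)|$ on bounded measurable functions. The operator $Th(x):=h*g(x)=\int_{0}^{x}h(x-y)g'(y)\,\d y$ satisfies
\[
\|Th\|_{\lambda,a}\leq \|h\|_{\lambda,a}\int_{0}^{a}e^{-\lambda y}|g'(y)|\,\d y,
\]
and the latter integral tends to zero as $\lambda\to\infty$: splitting at a small $\epsilon>0$, the part on $[\epsilon,a]$ is bounded by $e^{-\lambda\epsilon}|g(a)|$, while the part on $[0,\epsilon]$ is bounded by $|g(\epsilon)|$, which is small because $g(0)=0$. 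Choosing $\lambda$ so this integral is strictly less than one makes $f\mapsto 1+Tf$ a strict contraction on a ball in $(\cdot,\|\cdot\|_{\lambda,a})$, giving a unique solution of $f=1+Tf$ bounded on $[0,a]$. Picard iteration from $f_{0}\equiv 1$ produces precisely $f_{n}=\sum_{k=0}^{n}g^{*k}$, so the limit is the claimed series representation. Uniqueness of the solution in the stated class is immediate from the contraction.

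Next I would differentiate $f(x)=1+\int_{0}^{x}f(x-y)g'(y)\,\d y$ using Leibniz together with $f(0)=1$ to obtain the auxiliary Volterra equation
\[
F(x)=g'(x)+\int_{0}^{x}F(x-y)g'(y)\,\d y,
\]
and solve it by the same contraction with forcing $g'$ in place of the constant $1$. This yields $F(x)=\sum_{n\geq 1}(g')^{*n}(x)$ in the standard function-convolution sense. A short induction using $g^{*n}(0)=0$ for $n\geq 1$ and differentiation under the integral identifies $(g')^{*n}=g^{*n\prime}$ on $(0,\infty)$, giving the announced formula $f'=\sum_{n\geq 1}g^{*n\prime}$.

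The main obstacle will be the last step: uniform convergence of $\sum g^{*n\prime}$ on compact subsets of $(0,\infty)$ together with continuity of the sum. This is where the hypothesis $|g'(x)|\leq C(a)/x$ is essential, since $g'$ is not assumed locally bounded at $0$ and a naive supremum estimate cannot close the iteration for $F$. I would control $|(g')^{*n}(x)|$ on a compact $[b,a]\subset(0,\infty)$ by splitting each convolution at $x/2$: on $(0,x/2)$ the factor $|g'(x-y)|$ is bounded by $|g'(x/2)|\leq 2C(a)/x$ via monotonicity of $|g'|$, with a symmetric bound on $(x/2,x)$, so that each iterate picks up only an additional factor of $\int_{0}^{x}|g'|=|g(x)|$ against a multiplicative constant depending on $b$. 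Iterating yields pointwise bounds of the form $|g^{*n\prime}(x)|\leq A_{n}$ on $[b,a]$ with $\sum A_{n}<\infty$. Continuity of each $g^{*n\prime}$ on $(0,\infty)$ follows by dominated convergence in the convolution integral using $|g'|\in L^{1}_{\mathrm{loc}}$ as dominant, and the uniform convergence then transfers continuity to $f'$, justifying term-by-term differentiation of the series for $f$.
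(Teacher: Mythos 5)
Your treatment of the function series is correct and genuinely different from the paper's: the weighted (Bielecki) norm contraction gives existence, uniqueness among locally bounded functions, and the Neumann series $f=\sum_{n\geq0}g^{*n}$ with uniform convergence on compacts in one stroke, whereas the paper gets the same facts by elementary estimates ($|g^{*n}(x)|\leq|g(x)|^{n}$ near $0$, followed by an interval-doubling induction). The identification $g^{*n\prime}=(g')^{*n}$ that you invoke is the content of the paper's Lemma \ref{Properties} and is indeed routine (difference quotients plus dominated convergence, using monotonicity of $|g'|$).

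The gap is at exactly the point you flag as the main obstacle, and your outline does not close it. Splitting $g^{*(n+1)\prime}(x)=\bigl(\int_0^{x/2}+\int_{x/2}^{x}\bigr)g^{*n\prime}(x-y)g'(y)\,{\rm d}y$ controls the first piece by $\sup_{s\in[x/2,x]}|g^{*n\prime}(s)|\cdot|g(x/2)|$, so the induction must be evaluated at arguments that halve at every step; with $|g'(u)|\leq C(a)/u$ this yields the paper's estimate \eqref{EEstimate1}, namely $|g^{*n\prime}(x)|\leq n|g(x/2)|^{n-1}|g'(x/2^{n-1})|\leq n\,2^{n-1}|g(x/2)|^{n-1}C(a)/x$. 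Hence each iterate picks up a factor of roughly $2|g(x/2)|$, not merely $|g(x)|$ as you assert, and nothing forces $|g|<1$: on a general compact $[b,a]$ your bounds $A_{n}$ grow geometrically as soon as $|g(a/2)|\geq 1/2$, so the claimed $\sum_{n}A_{n}<\infty$, and with it the uniform convergence of $\sum_{n}g^{*n\prime}$ and the continuity of $f'$, is only obtained near the origin, where $|g|$ is small because $g(0)=0$. The paper needs a further idea precisely here: exploiting the already established uniform convergence of $\sum_{n}g^{*n}$, it picks $k$ with $\max_{0\leq j<k}\sup_{s\in(0,b]}|g^{*(k+j)}(s)|<1/4$, reruns the same halving induction with $g^{*k}$ as the convolution kernel (induction hypothesis $|g^{*(nk+j)\prime}(x)|\leq n(k+j)Q(x)q^{n-1}|g'(x/2^{k+j+n})|$) to cover $(\epsilon,2b]$, and then iterates this interval-doubling indefinitely. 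Your proposal needs this bootstrap, or some other device that defeats both the $2^{n}$ growth and the possible largeness of $|g|$, before the assertions about $f'$ hold on all of $(0,\infty)$ rather than on an initial interval.
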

\begin{theorem}\label{Renewal}
Let $g\in C^{2}(0,\infty)$ be a function satisfying $g(0)=0$ and $|g'(x)|\leq Cx^{-\alpha}$ in a neighbourhood of   $0$, for some $C>0$ and $\alpha<1$. Then the solution of the renewal equation \eqref{renewal equation} is unique in the class of functions which are bounded on bounded intervals and has the form $f(x)=\sum_{n\geq0}{g^{*n}}(x)$ on any interval $[0,a)$. If in addition $|g''(x)|\leq C(a)/x^{\alpha+1}$ on any interval $(0,a)$, then, for any $k=0,1,2,\cdots$, $f \in C^{k+2}(0,\infty)$ if and only if $g\in C^{k+2}(0,\infty)$.
\end{theorem}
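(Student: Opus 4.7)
My plan is to handle existence and uniqueness of the series representation first, then prove the smoothness equivalence by induction on $k$.

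For existence and uniqueness, the central estimate is
\[
|g^{*n}(x)|\leq \frac{[C_a\,\Gamma(1-\alpha)]^n}{\Gamma(n(1-\alpha)+1)}\,x^{n(1-\alpha)},\qquad x\in[0,a],
\]
obtained by induction from $|g^{*n}(x)|=|\int_0^x g^{*(n-1)}(x-y)g'(y)\,\mathrm{d}y|$ using the bound $|g'(y)|\leq C_a y^{-\alpha}$ and the Beta integral $\int_0^x (x-y)^p y^{-\alpha}\,\mathrm{d}y = x^{p+1-\alpha}B(p+1,1-\alpha)$. The $\Gamma$-growth in the denominator yields absolute uniform convergence of $\sum g^{*n}$ on $[0,a]$, and a term-wise interchange shows the sum solves the renewal equation. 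For uniqueness, the difference $h$ of two bounded solutions satisfies $h=h*g$, hence $h=h*g^{*n}$ for every $n$; the above estimate, applied to the total variation of $g^{*n}$, then forces $h\equiv 0$.

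The key identity for the smoothness part is obtained by differentiating the renewal equation under the integral sign (which is legitimate because $g(0)=0$ and $g'$ has an integrable singularity at $0$), yielding
\[
f'(x)=g'(x)+\int_0^x f'(x-y)g'(y)\,\mathrm{d}y\qquad\Longleftrightarrow\qquad g'=f'-f'*g.
\]
I would then induct on $k$, the base case $k=0$ being trivial since $g\in C^2(0,\infty)$ by hypothesis. For the backward direction ($g\in C^{k+2}\Rightarrow f\in C^{k+2}$) I would differentiate $f=\sum_n g^{*n}$ term by term and establish inductive estimates of the form $|(g^{*n})^{(j)}(x)|\leq c_{n,j}\,x^{n(1-\alpha)-j+\text{(correction)}}$ with $c_{n,j}$ summable, for $j\leq k+2$; the bound $|g''(x)|\leq C(a)/x^{\alpha+1}$ is exactly what controls the Beta-type integrals produced by each further differentiation. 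For the forward direction, given the inductive hypothesis $g\in C^{k+1}$ and $f\in C^{k+2}$, I would use $g'=f'-f'*g$ and prove $f'*g\in C^{k+1}$ by splitting
\[
(f'*g)(x)=\int_0^{x/2} f'(x-y)g'(y)\,\mathrm{d}y+\int_{x/2}^x f'(x-y)g'(y)\,\mathrm{d}y,
\]
and, in the second piece, making the substitution $u=x-y$ so that each resulting integrand has its singular endpoint only at one end. Iterated differentiation then produces boundary contributions at the midpoint $x/2$ which cancel between the two pieces, and the interior integrals are absolutely convergent and continuous in $x>0$ by the singularity bounds.

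The main obstacle is precisely the forward direction: a naive application of Leibniz's rule to $(f'*g)'$ would involve $f'(0+)$ or $g'(0+)$, both of which are generically $+\infty$ under the prescribed $x^{-\alpha}$ behaviour. The splitting at $x/2$, combined with the precise bounds $|g'(x)|\leq Cx^{-\alpha}$ and $|g''(x)|\leq C(a)x^{-\alpha-1}$, is calibrated so that after each differentiation the new integrals remain absolutely convergent and continuous on $(0,\infty)$; tracking the cancelling midpoint boundary terms through all $k+1$ differentiations is what makes the argument technical.
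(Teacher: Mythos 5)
Your existence--uniqueness argument (the Beta-function iteration giving $|g^{*n}(x)|\leq [C_a\Gamma(1-\alpha)]^n x^{n(1-\alpha)}/\Gamma(n(1-\alpha)+1)$) is sound and, if anything, tidier than the paper's interval-doubling argument; note only that for uniqueness you need the same iteration applied to $|g^{*n\prime}|$, since it is the total variation of $g^{*n}$, not $g^{*n}$ itself, that enters $h=h*g^{*n}$. The genuine gap is in the smoothness part, in the direction $g\in C^{k+2}\Rightarrow f\in C^{k+2}$. Your plan rests on inductive power-law bounds $|(g^{*n})^{(j)}(x)|\leq c_{n,j}x^{n(1-\alpha)-j+\cdots}$ for every $j\leq k+2$, "controlled by $|g''(x)|\leq C(a)x^{-\alpha-1}$". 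But the hypotheses give no quantitative control at all on $g^{(j)}$ near $0$ for $j\geq 3$: for instance $g''$ may contain a term $\varepsilon\sin(e^{1/x})$, which is compatible with all assumptions and with $g\in C^{\infty}(0,\infty)$, yet makes $g'''=(g^{*1})'''$ blow up faster than any power at $0+$, and $(g^{*2})'''$ involves $g'''$ on $[x/2,x]$. So the claimed estimates are false for small $n$, and they cannot be propagated by Beta-type integrals either, because putting all $j$ derivatives on one factor of $g^{*(n-1)}*g$ produces a non-integrable singularity at the moving endpoint. The paper's proof is built precisely to avoid this: it writes $g^{*(n+2k)}=g^{*n}*g^{*(2k)}$ with $k$ chosen (via $|g^{*(2k)\prime\prime}(x)|\leq C_{2k}(a)x^{2k-2-2k\alpha}$) so that $g^{*(2k)\prime\prime}(0+)=0$, splits the convolution at $\epsilon$ and $x-\epsilon$, and arranges that the top-order derivative only ever falls on the single fixed function $g^{*(2k)}$ evaluated at arguments bounded away from $0$, while every term that must be summed over $n$ carries only derivatives of $g^{*n}$ of order $\leq l$; summability in $n$ then comes from an induction on the derivative order (uniform convergence of the lower-order series on compacts), not from explicit power bounds. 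Some device of this kind is indispensable, and your sketch does not contain it.

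The converse direction also has a structural problem. Starting from $g'=f'-f'*g$ and splitting at $x/2$, the piece $\int_{x/2}^{x}f'(x-y)g'(y)\,{\rm d}y=\int_0^{x/2}f'(u)g'(x-u)\,{\rm d}u$ can only be differentiated in $x$ by loading derivatives onto $g$: you cannot move them onto $f'(u)$, undoing the substitution reintroduces the boundary term $f'(0+)g'(x)$ you were trying to avoid, and bounds on $f^{(j)}(0+)$ for $j\geq 3$ are unavailable for the same reason as above. Hence after $k+1$ differentiations you meet $\int_0^{x/2}g^{(k+2)}(x-u)f'(u)\,{\rm d}u$, i.e.\ exactly the object you are trying to construct -- the argument is circular, and the cancelling midpoint terms are not the real difficulty. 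The paper sidesteps this by rearranging the equation as $g=(f-1)+g*(1-f)$, so that $g=f-1+\sum_{n\geq1}(f-1)*(1-f)^{*n}$, verifying that $|f'(x)|\leq D(a)x^{-\alpha}$ and $|f''(x)|\leq D(a)x^{-\alpha-1}$, and then reapplying the forward machinery with $1-f$ playing the role of the kernel; this reduction (or some equivalent non-circular scheme) is what your proposal is missing.
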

\begin{remark}\label{ThmRenRem}\rm
Note that according to the conditions of the previous theorem, $g \in C^{2}(0,\infty)$ and therefore $f \in C^{2}(0,\infty)$.
\end{remark}

Although the above two theorems will be used to address the issue of smoothness properties of scale functions, we may also deduce some new results concerning  renewal functions of subordinators. 
Indeed this is the purpose of the next corollary which generalises a classical result of Kesten (1969). The latter says that whenever  $U({\rm d}x)$ is the renewal measure of a subordinator with  drift coefficient  $\delta>0$ then $U$ is absolutely continuous with respect to Lebesgue measure with a  density, $u(x)$ which has a continuous version (which in turn is $\delta^{-1}$ multiplied by the probability that the underlying subordinator crosses the level $x$ by creeping) for $x>0$.


\begin{corollary}\label{IV}
Suppose that $U$ is the renewal measure of a subordinator with positive drift $\delta$ and a L\'evy measure $\mu$ which is also assumed to have no atoms. Then the renewal density has the form \begin{equation}\label{Den1}u(x)=\sum_{n\geq0}\frac{\eta^{*n}(x)}{\delta^{n+1}},\end{equation} where $\eta^{*(n+1)}(x)=-\int_{0}^{x}\eta^{*n}(x-y)\overline{\mu}(y){\rm d}y$, $\eta(x)=-\int_{0}^{x}\overline{\mu}(y){\rm d}y$ and $\overline{\mu}(x) = \mu(x,\infty)$ and it is continuously differentiable with
\begin{equation}\label{Den1}u'(x)=\sum_{n\geq0}\frac{\eta^{*n\prime}(x)}{\delta^{n+1}}.\end{equation} 
\end{corollary}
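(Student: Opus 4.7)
The strategy is to recast the identity $U^*(\theta) = 1/\phi(\theta)$ as a renewal equation of the type covered by Theorem \ref{Sub}. Writing the Laplace exponent $\phi(\theta) = \delta\theta + \int_0^\infty(1-e^{-\theta x})\mu(\mathrm{d}x)$ and integrating by parts in the integral gives $\phi(\theta) = \theta(\delta + \overline\mu^*(\theta))$, where $\overline\mu^*(\theta) := \int_0^\infty e^{-\theta x}\overline\mu(x)\mathrm{d}x$. Setting $\tilde g := \eta/\delta$, this factorisation is nothing but the Laplace transform of the renewal equation
\begin{equation*}
f(x) = 1 + f*\tilde g(x).
\end{equation*}
The plan is to apply Theorem \ref{Sub} to obtain $f$ and then identify $f$ with $\delta u$.

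The bulk of the work is verifying the hypotheses of Theorem \ref{Sub} for $\tilde g$. By construction $\tilde g(0) = 0$, and $\tilde g$ is nonpositive and decreasing on $(0,\infty)$; its derivative $\tilde g'(y) = -\overline\mu(y)/\delta$ is continuous (since $\mu$ has no atoms), monotone (since $\overline\mu$ is), and vanishes at infinity. The quantitative bound $|\tilde g'(y)|\leq C(a)/y$ on $(0,a)$ follows from the subordinator integrability condition $\int_0^1 y\,\mu(\mathrm{d}y) < \infty$: for $y\in(0,1]$ one splits
\begin{equation*}
y\overline\mu(y) = y\int_y^1 \mu(\mathrm{d}s) + y\int_1^\infty \mu(\mathrm{d}s) \leq \int_0^1 s\,\mu(\mathrm{d}s) + \overline\mu(1) < \infty,
\end{equation*}
while on $[1,a]$ one uses the crude bound $y\overline\mu(y)\leq a\overline\mu(1)$.

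Theorem \ref{Sub} then delivers the unique bounded-on-bounded-intervals solution $f(x) = \sum_{n\geq 0}\tilde g^{*n}(x)$, together with the existence and continuity of $f'(x) = \sum_{n\geq 0}\tilde g^{*n\prime}(x)$. A short induction using the convention $g^{*(n+1)}(x) = \int_0^x g^{*n}(x-y)g'(y)\mathrm{d}y$ together with $\eta'=-\overline\mu$ shows $\tilde g^{*n} = \eta^{*n}/\delta^n$, as well as the recursion $\eta^{*(n+1)}(x) = -\int_0^x \eta^{*n}(x-y)\overline\mu(y)\mathrm{d}y$ stated in the corollary. Finally, to identify $f$ with $\delta u$, take Laplace transforms of the renewal equation to get $f^*(\theta) = \delta/\phi(\theta)$, which coincides with $\delta U^*(\theta)$; since $U$ has no atom at the origin (the positive drift forces the subordinator to leave zero instantly) and $f$ is continuous, Laplace uniqueness forces $U(\mathrm{d}x) = (f(x)/\delta)\mathrm{d}x$. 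The displayed formulas for $u$ and $u'$ then follow directly. The sole real obstacle is the $|\tilde g'(y)|\leq C(a)/y$ estimate, which is handled by the standard Lévy-measure computation above.
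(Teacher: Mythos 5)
Your argument is correct in substance but reaches the renewal equation by a genuinely different route from the paper. The paper starts from Kesten's theorem, which already provides a continuous density $u$ with $\delta u(x)$ equal to the probability that the subordinator creeps over the level $x$; adding the probability of jumping over $x$ yields $\delta u(x)+\int_{0}^{x}u(x-y)\overline{\mu}(y)\,{\rm d}y=1$ directly, and Theorem \ref{Sub} is then invoked exactly as you invoke it (the hypothesis check, namely $x\overline{\mu}(x)\leq C(a)$ on $(0,a)$, equivalently $\lim_{x\downarrow 0}x\overline{\mu}(x)=0$ from $\int_{0}^{1}y\,\mu({\rm d}y)<\infty$, is the same in both treatments). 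You instead obtain the equation analytically from the factorisation $\phi(\theta)=\theta\bigl(\delta+\overline{\mu}^{*}(\theta)\bigr)$, solve it via Theorem \ref{Sub}, and only afterwards identify the solution with $\delta u$ by uniqueness of Laplace transforms. What your route buys is that the absolute continuity of $U$ comes out as a by-product rather than being quoted from Kesten; what the paper's route buys is that the identification $f=\delta u$ is immediate and probabilistic, with no transform manipulations to justify.

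Two caveats. First, the step ``take Laplace transforms of the renewal equation'' presupposes that $f$ has a finite Laplace transform for large $\theta$, whereas Theorem \ref{Sub} only gives boundedness on bounded intervals. This is easily repaired: pick $c>0$ with $\rho:=\delta^{-1}\int_{0}^{\infty}e^{-cy}\overline{\mu}(y)\,{\rm d}y<1$ and deduce from $f=1+f*\tilde g$ that $\sup_{x\leq T}e^{-cx}|f(x)|\leq (1-\rho)^{-1}$ for every $T$, which gives both the finiteness of $f^{*}(\theta)$ for $\theta\geq c$ and the Fubini exchange needed to transform the convolution; but as written this is an unproved assertion, not a consequence of anything you have established. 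Second, the paper understands ``subordinator'' in the broad sense that allows killing, and half of its proof is devoted to reducing the killed case to the unkilled one by an exponential change of measure. Your factorisation of $\phi$ has no killing term, so your argument covers only the unkilled case; note that one cannot simply absorb a killing rate $q$ into $\tilde g$, since then $|\tilde g'(\infty)|=q/\delta\neq 0$ and the hypotheses of Theorem \ref{Sub} as stated are violated, which is presumably why the paper proceeds by tilting instead.
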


\begin{proof} Let us temporarily assume that the underlying subordinator is not subject to killing.
Then the probability of this subordinator crossing a level $x$ is $1$ and this can happen either by creeping or jumping over this level. With the help of Kesten (1969)  this can otherwise be written as
\begin{equation}\label{Crossing} 
\delta u(x)+\int_{0}^{x}u(x-y)\overline{\mu}(y){\rm d}y=1
\end{equation}
(see for example Chapter 3 of Bertoin (1996)). Now letting $f(x) = \delta u(x)$ and $g(x) = -\delta^{-1} \int_0^x \overline{\mu}(y){\rm d}y$ and noting that the necessary condition on subordinator L\'evy measures, $\int_{(0,1)}x\mu({\rm d}x)<\infty$, implies that $\lim_{x\downarrow 0}x\overline{\mu}(x)=0$, we see that the conditions of Theorem \ref{Sub} are satisfied and the statement of the corollary follows.

  Now we turn to the case when the underlying subordinator is killed. In that  case, suppose that it has the same law as an unkilled subordinator, say $Y$ with Laplace exponent $\eta(\theta) = - \log{E}(e^{-\theta Y_1})$ for $\theta\geq 0$,  killed at rate $q>0$. The result of Kesten (1969) tells us that 
\[
U({\rm d}x) = \frac{1}{\delta} {P}(e^{- qT_x^+}; Y_{T^+_x} = x){\rm d}x
\]
where $T^+_x = \inf\{t>0: Y_t = x\}$ (see for example Exercise 5.5 in Kyprianou (2006)). We may thus write 
\[
U({\rm d}x) =  \frac{1}{\delta}e^{\eta^{-1}(q)x} {P}(e^{-\eta^{-1}(q)x- qT_x^+}; Y_{T^+_x} = x){\rm d}x
=\frac{1}{\delta}e^{\eta^{-1}(q)x} U_{\eta^{-1}(q)}({\rm d}x)
\]
where $U_{\eta^{-1}(q)}({\rm d}x)$ is the renewal measure of the subordinator $Y$ when seen under the exponential change of measure associated with the martingale $\{e^{-\eta^{-1}(q)Y_t - qt}:t\geq 0 \}$ and $\eta^{-1}$ is the right inverse of $\eta$. Note that the Laplace exponent of process $Y$ under the aforementioned change of measure is given by $\eta(\theta+ \eta^{-1}(q))-q$ for $\theta\geq 0$ and it is straightforward to deduce that there is no killing term and the L\'evy  measure is given by $e^{-\eta^{-1}(q)x}\mu({\rm d}x)$.  Since $U_{\eta^{-1}(q)}({\rm d}x)$ is the potential measure of an unkilled subordinator and  since the behaviour at the origin of the measure $e^{-\eta^{-1}(q)x}\mu({\rm d}x)$ is identical to that of $\mu$ with regard to the role it plays through the function $g$, the result for the case of killed subordinators follows from the first part of the proof. \end{proof}

\section{Renewal equation proofs}
In this section we shall prove Theorems \ref{Sub} and \ref{Renewal}. Before doing so, we first need to establish the following auxiliary lemma.

\begin{lemma}\label{Properties}
Let g be a negative, decreasing function on $(0,\infty)$ with $g(0)=0$
and let $|g'(x)|$ be decreasing. Then $|g^{*n}(x)|=(-1)^{n}g^{*n}(x)$. Moreover $|g^{*n\prime}(x)|=(-1)^{n}g^{*n\prime}(x)$ and 
\begin{equation}\label{derivatives}
g^{*(n+1)\prime}(x)=\int_{0}^{x}g'(x-y)g^{*n\prime}(y){\rm d}y=\int_{0}^{x}g^{*n\prime}(x-y)g'(y){\rm d}y.
\end{equation}
In conclusion $g^{*n}(x)$ is increasing for $n$ even and decreasing otherwise.
\end{lemma}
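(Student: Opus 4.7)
The plan is a simultaneous induction on $n\geq 1$ of three statements: (a) $g^{*n}(0)=0$ and $g^{*n}$ has sign $(-1)^n$ on $(0,\infty)$; (b) $g^{*n}$ is differentiable and $g^{*n\prime}$ has sign $(-1)^n$; (c) the convolution identity \eqref{derivatives} holds. The base case $n=1$ is immediate: $g(0)=0$ and $g<0$ on $(0,\infty)$ because $g$ is negative and decreasing with $g(0)=0$, while $g'<0$ because $g$ is decreasing; hence $|g|=-g=(-1)^1 g$ and likewise for $g'$. The monotonicity conclusion at the end is just a restatement of (b): the sign of $g^{*n\prime}$ determines whether $g^{*n}$ is increasing or decreasing.

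For the inductive step I would first establish (c). Starting from the definition used in the paper,
\[
g^{*(n+1)}(x)=\int_{0}^{x}g^{*n}(x-y)g'(y)\,{\rm d}y,
\]
I change variables $u=x-y$ to get $\int_{0}^{x}g^{*n}(u)g'(x-u)\,{\rm d}u$ and integrate by parts, differentiating $g^{*n}$ (available by the inductive hypothesis) and anti-differentiating $g'(x-u)$ as $-g(x-u)$. The boundary terms vanish because $g(0)=0$ and $g^{*n}(0)=0$ (this is precisely where the inductive hypothesis (a) on values at $0$ is used), leaving
\[
g^{*(n+1)}(x)=\int_{0}^{x}g^{*n\prime}(u)g(x-u)\,{\rm d}u.
\]
Differentiating under the integral (the integrand is absolutely continuous in $x$ with dominating function $|g^{*n\prime}(u)|\cdot|g'(x-u)|$ which is locally integrable by the bound $|g'(x)|\leq C(a)/x$) and using $g(0)=0$ to kill the new boundary term produces
\[
g^{*(n+1)\prime}(x)=\int_{0}^{x}g^{*n\prime}(u)g'(x-u)\,{\rm d}u,
\]
which is \eqref{derivatives} after the change of variables $y=x-u$; the second equality in \eqref{derivatives} is just the symmetry of convolution applied to the two factors $g'$ and $g^{*n\prime}$.

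Once (c) is in hand, (a) and (b) are immediate. From the original representation, the integrand $g^{*n}(x-y)g'(y)$ has sign $(-1)^n\cdot(-1)=(-1)^{n+1}$ by the inductive hypothesis, so $g^{*(n+1)}$ inherits sign $(-1)^{n+1}$; evaluating at $x=0$ gives $g^{*(n+1)}(0)=0$. From the newly-proved formula \eqref{derivatives}, the integrand $g'(x-y)g^{*n\prime}(y)$ has sign $(-1)\cdot(-1)^n=(-1)^{n+1}$, so $g^{*(n+1)\prime}$ has sign $(-1)^{n+1}$. This closes the induction, and the final monotonicity assertion is then exactly the translation of (b) from sign of derivative to monotonicity.

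The only place where care is required is the integration by parts, which must be justified under the integrability assumption $|g'(x)|\leq C(a)/x$: one should check that $g^{*n\prime}$ inherits a local integrability bound good enough to differentiate under the integral, which can be done by a simple induction using the same hypothesis on $g'$. Beyond that bookkeeping, no step is deep; the lemma is essentially a clean sign-and-derivative computation organised by induction.
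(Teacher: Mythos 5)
Your overall strategy --- induction on $n$, rewriting $g^{*(n+1)}$ so that the derivative sits on the $n$-fold factor, and then differentiating via a dominated-convergence argument --- is essentially the route the paper takes; your integration by parts just makes explicit the commutation $g^{*(n+1)}(x)=\int_0^x g(x-y)\,g^{*n\prime}(y)\,{\rm d}y$ which the paper uses without comment, and the sign and monotonicity bookkeeping is identical.

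The one step whose justification does not hold up as written is the differentiation under the integral, which is the only genuinely analytic point of the lemma. You appeal to the bound $|g'(x)|\le C(a)/x$ to claim that the dominating function $|g^{*n\prime}(u)|\,|g'(x-u)|$ is locally integrable; but $1/x$ is not integrable at the endpoint, so that bound by itself gives nothing (and, strictly speaking, it is a hypothesis of Theorem \ref{Sub}, not of Lemma \ref{Properties}, although it does follow from the lemma's hypotheses since $x|g'(x)|\le\int_0^x|g'(y)|\,{\rm d}y=|g(x)|$). What actually makes the step work --- and what your plan never invokes --- is the hypothesis that $|g'|$ is decreasing. It yields the pointwise domination of difference quotients, $|g(x+h-y)-g(x-y)|\le h\,|g'(x-y)|$ for $h>0$ (and its analogue for $h<0$), and, after splitting the convolution integral at $x/2$, the finiteness of $\int_0^x|g^{*n\prime}(u)|\,|g'(x-u)|\,{\rm d}u$: on $(0,x/2)$ bound $|g'(x-u)|$ by $|g'(x/2)|$ and integrate $|g^{*n\prime}|$, which is integrable near $0$ because $\int_0^t|g^{*n\prime}|=|g^{*n}(t)|$; on $(x/2,x)$ bound $|g^{*n\prime}(u)|$ by a local sup (carried along in the induction, as in the paper's estimate \eqref{EEstimate1}) and integrate $|g'(x-u)|$, which is integrable because $g$ is absolutely continuous --- not because of the $1/x$ bound. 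With these two points supplied, your argument gives the two-sided derivative at \emph{every} $x>0$, exactly as in the paper's difference-quotient proof; without them, differentiating under the integral in the ``absolutely continuous with integrable derivative'' sense only yields \eqref{derivatives} almost everywhere, which is weaker than what the lemma asserts and than what the proof of Theorem \ref{Sub} subsequently uses.
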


\begin{proof}
The first statement is obvious from the definition of convolution $$g^{*(n+1)}(x):=\int_{0}^{x}g^{*n}(x-y)g'(y){\rm d}y.$$ Next we prove  $|g^{*n\prime}(x)|=(-1)^{n}g^{*n\prime}(x)$ and  \eqref{derivatives}. We achieve this by an inductive argument. Note that $g'(x)$ is absolutely integrable and write, for $h>0$,
\begin{eqnarray*}
|g^{*2}(x+h)-g^{*2}(x)| &\leq& \int_{0}^{x}\big|g(x+h-y)-g(x-y)\big||g'(y)|{\rm d}y\\
&&\hspace{1cm}+|g(h)||g(x+h)-g(x)|.
\end{eqnarray*}
The fact that $|g'(x)|$ is decreasing implies that $\big|g(x+h-y)-g(x-y)\big|\leq h|g'(x-y)|=-h g'(x-y)$. Finally from the existence of $\int_{0}^{x}g'(x-y)g'(y){\rm d}y$, the nonpositivity of $g'(x)$ and the dominated convergence theorem it can be deduced that
\[|g^{*2\prime}(x+)|=g^{*2\prime}(x+)=\lim_{h\rightarrow0+}\frac{|g^{*2}(x+h)-g^{*2}(x)|}{h}=\int_{0}^{x}g'(x-y)g'(y){\rm d}y.
\] 
This serves as a basis for an induction hypothesis, which states that $|g^{*n\prime}(x+)|=(-1)^{n}g^{*n\prime}(x+)$ and 
\[g^{*n\prime}(x+)=\int_{0}^{x}g'(x-y)g^{*(n-1)\prime}(y){\rm d}y=\int_{0}^{x}g^{*(n-1)\prime}(x-y)g'(y){\rm d}y.\]
To show these statements for $n+1$, we use the induction hypothesis, the preceding part of the proof and 
\begin{align*}
&|g^{*(n+1)}(x+h)-g^{*(n+1)}(x)|\leq\\& \int_{0}^{x}|g(x+h-y)-g(x-y)||g^{*n\prime}(y)|{\rm d}y+|g(h)||g^{*n}(x+h)-g^{*n}(x)|,
\end{align*}
to justify the use of the dominated convergence theorem and hence deduce the statement for $n+1$.

We conclude the proof by noting that the same arguments work for $h<0$ and we can show that $g^{*n\prime}(x+)=g^{*n\prime}(x-)=\int_{0}^{x}g'(x-y)g^{*(n-1)\prime}(y){\rm d}y$. The last statement of the lemma is  an  obvious consequence of the other assertions.
\end{proof}

\bigskip

\begin{proof}[Proof of Theorem \ref{Sub}]
First we discuss the properties of $\phi(x):=\sum_{n\geq 0}g^{*n}(x)$ and in the end we show that this is the only solution of equation \eqref{renewal equation}. We start by showing that on every interval $(0,a)$ the series defining $\phi(x)$ is uniformly convergent and therefore $\phi(x)$ is continuous on $(0,a)$.

It is clear from the conditions of the theorem, i.e. $g(0)=0$, that there is an interval $(0,b]$ such that $|g(x)|<1$. Then we get
\begin{equation}\label{Estimate}
|g^{*n}(x)|=|\int_{0}^{x}g^{*(n-1)}(x-y)g'(y){\rm d}y|\leq|g^{*(n-1)}(x)||g(x)|\leq |g(x)|^{n},
\end{equation}
which follows directly from the fact that $|g^{*k}(x)|$ is increasing in $x$, for each $k\geq1$ (cf. Lemma \ref{Properties}) together with an obvious iteration of the first inequality. Therefore the series $\sum_{n\geq 0}g^{*n}(x)$ is uniformly convergent on $(0,b]$. Next, we show how this can be extended to the interval $(0,2b]$. 

We know from Lemma \ref{Properties} that $|g^{*n}(x)|$ is nondecreasing and therefore it is sufficient to show that $g^{*n}(2b)\leq Cn\gamma^{n}$, for each $n$ and some $\gamma<1$. We then set up an induction hypothesis that $|g^{*n}(2b)|\leq n|g(b)|^{n-1}|g(2b)|$, which clearly holds for $n=1$.
Note also  that
\begin{eqnarray*}
|g^{*(n+1)}(2b)|&\leq& \int_{0}^{b}|g^{*n}(2b-x)||g'(x)|dx +\int_{b}^{2b}|g^{*n}(2b-x)||g'(x)|dx\\
&\leq&
|g^{*n}(2b)||g(b)|+|g^{*n}(b)||g(2b)|\\
&\leq& n|g(b)|^{n}|g(2b)|+|g(b)|^{n}|g(2b)|\\
&=&(n+1)|g(b)|^{n}|g(2b)|,
\end{eqnarray*} 
where we have used the properties stated in Lemma \ref{Properties} and \eqref{Estimate}. Taking into account the fact that $|g(b)|<1$, we deduce that 
the series $\sum_{n\geq0}g^{*n}(x)<\infty$ is uniformly convergent on $(0,2b]$.

Next, we show how the uniform convergence can be extended to the interval $(0,4b]$. Since $\sum_{n\geq0}g^{*n}(x)<\infty$ is uniformly convergent on $(0,2b]$, we can find $k$ such that $g^{*k}(2b)<1$ and apply the previous arguments to the function $g^{*k}$ to deduce that $\sum_{l\geq 1}g^{*(l k)}(x)$ is uniformly convergent on $(0,4b]$. For a series of the type $\sum_{l\geq 0}g^{*( l k+j)}(x)$, for $0\leq j\leq k-1$, we easily get uniform convergence using the trivial estimate $|g^{*(l k+j)}(x)|\leq |g^{*(lk)}(x)||g(x)|^{j}$, the fact that $|g(x)|^{j}\leq |g(4b)|^{j}$  and the uniform convergence of $\sum_{l\geq 0}g^{*(l k)}(x)$. Finally note that $\phi(x)  = \sum_{j=1}^{k-1} \sum_{l\geq 0}g^{*(lk+j)}(x)$. This process can be continued {\it ad infinitum} and therefore it implies that $\phi(x)$ is well defined on $(0,\infty)$.

Next we wish to show that $\phi'(x)=\sum_{n\geq 1}g^{*n\prime}(x)$. We pick an interval $(\epsilon,b]$, where $b$ is chosen in a way that $|g(b)|=q<1/4$. Using \eqref{derivatives} in Lemma \ref{Properties} and the fact that $|g'|=-g'$ is nonincreasing, we obtain, for each $x>0$, 
\[|g^{*2\prime}(x)|\leq 2|g'\big(\frac{x}{2}\big)||g\big(\frac{x}{2}\big)|.\] This allows us to set up the following induction hypothesis, for each $x>0$,
\[|g^{*n\prime}(x)|\leq n |g\big(\frac{x}{2}\big)|^{n-1}|g'\big(\frac{x}{2^{n-1}}\big)|.\]
Then we check using \eqref{derivatives}, the induction hypothesis and \eqref{Estimate}, that
\begin{eqnarray}
 \big|g^{*(n+1)\prime}(x)\big|&\leq& \int_{0}^{\frac{x}{2}}|g^{*n\prime}(x-y)||g'(y)|{\rm d}y+\int_{\frac{x}{2}}^{x}|g^{*n\prime}(x-y)||g'(y)|{\rm d}y\notag \\
 &\leq &
\sup_{\frac{x}{2}\leq s\leq x}\big|g^{*n\prime}(s)\big|\big|g\big(\frac{x}{2}\big)\big|+\big|g'\big(\frac{x}{2}\big)\big|\big|g^{*n}\big(\frac{x}{2}\big)\big|\notag\\
&\leq &
n \big|g\big(\frac{x}{2}\big)\big|^{n-1}\big|g'\big(\frac{x}{2^{n}}\big)\big|\big|g\big(\frac{x}{2}\big)\big|+\big|g\big(\frac{x}{2}\big)\big|^{n}\big|g'\big(\frac{x}{2^{n}}\big)\big|\notag\\
&=&(n+1)\big|g\big(\frac{x}{2}\big)\big|^{n}\big|g'\big(\frac{x}{2^{n}}\big)\big|.
\label{EEstimate1}
\end{eqnarray}
Finally we recall that according to the conditions of Theorem \ref{Sub} and the choice of $b$, $|g'(x)|\leq C(b)/x$ and $|g(x)|=q<1/4$, for each $x\in (\epsilon,b)$. Therefore
\[\sum_{n\geq 1}|g^{*n\prime}(x)|\leq \frac{C(b)}{\epsilon}\sum_{n\geq 1}(n+1)2^{n}q^{n}<\infty.\]
 Thus, we conclude that, for each $\epsilon>0$, the series $\sum_{n\geq 1}g^{*n\prime}(x)$ is uniformly convergent on $(\epsilon,b)$ and hence by the dominated convergence theorem $\phi'(x)$ exists and equals $\sum_{n\geq 1}g^{*n\prime}(x)$. Since $\epsilon$ is arbitrary, the latter conclusion holds for $x\in(0,b]$.

 We aim next at extending this identity to the interval $(0,2b]$. We follow a similar argument to the one applied to the function $\phi(x)$ itself. For this purpose we fix an interval $(\epsilon,2b]$ and choose $k$ so large that 
\[q=\max_{0\leq j<k}\sup_{s\in (0,b]}|g^{*(k+j)}(s)|<\frac{1}{4}.\]
This can be done due to the uniform convergence of the series $\sum_{n\geq 0}g^{*n}(x)$ on $(0,b]$. Then the proof follows a well established pattern from above. It can directly be estimated using \eqref{EEstimate1} that, for each $x>0$,
\begin{eqnarray*}
\big|g^{*(2k+j)\prime}(x)\big| 
&\leq& \int_{0}^{\frac{x}{2}}\big|g^{*k\prime}(x-y)\big|\big|g^{*(k+j)\prime}(y)\big|{\rm d}y\\
&&+\int_{\frac{x}{2}}^{x}\big|g^{*k\prime}(x-y)\big|\big|g^{*(k+j)\prime}(y)\big|{\rm d}y\\
&\leq&
\sup_{\frac{x}{2}\leq s\leq x}\big|g^{*k\prime}(s)\big|\big|g^{*(k+j)}\big(\frac{x}{2}\big)\big|+\sup_{\frac{x}{2}\leq s\leq x}\big|g^{*(k+j)'}(s)\big|\big|g^{*k}\big(\frac{x}{2}\big)\big|\\
&\leq& k\big|g'\big(\frac{x}{2^{k}}\big)\big|\big|g\big(\frac{x}{2}\big)\big|^{k-1}\big|g^{*(k+j)}\big(\frac{x}{2}\big)\big|\\
&&+(k+j)\big|g'\big(\frac{x}{2^{k+j}}\big)\big|\big|g\big(\frac{x}{2}\big)\big|^{k+j-1}\big|g^{*k}\big(\frac{x}{2}\big)\big|\\
&\leq& 2(k+j)\big|g'\big(\frac{x}{2^{k+j}}\big)\big|
\Big(
\max_{0\leq j<k}
\big|
g^{*(k+j)}(\frac{x}{2})
\big|
\Big)
\Big(\max\big\{\big|g\big(\frac{x}{2}\big)\big|^{k+j-1},\big|g\big(\frac{x}{2}\big)\big|^{k-1}\big\}\Big).
\end{eqnarray*}
Therefore, for each $x\in (0,2b]$, we get
\[|g^{*(2k+j)\prime}(x)|\leq 2(k+j)Q(x)q\big|g'\big(\frac{x}{2^{k+j}}\big)\big|,\]
where $Q(x)=Q(k,j,x)=\max\{|g(\frac{x}{2})|^{k+j-1},0\leq j<k\}$ is a non-decreasing function in $x$.

This allows us to set up the following induction hypothesis. For each $0\leq j\leq k-1$ and $x\in (0,2b]$,
\[\big|g^{*(nk+j)\prime}(x)\big|\leq n(k+j)Q(x)q^{n-1}\big|g'\big(\frac{x}{2^{k+j+n}}\big)\big|.\]
Then we write using the latter
\begin{eqnarray*}
\big|g^{*((n+1)k+j)\prime}(x)\big|&=&\big|\int_{0}^{x}g^{*(nk+j)\prime}(x-y)g^{*k\prime}(y){\rm d}y\big|\\
&\leq&
\int_{0}^{\frac{x}{2}}\big|g^{*(nk+j)\prime}(x-y)\big|\big|g^{*k\prime}(y)\big|{\rm d}y\\
&&+\int_{\frac{x}{2}}^{x}\big|g^{*(nk+j)\prime}(x-y)\big|\big|g^{*k\prime}(y)\big|{\rm d}y\\
&\leq&
\sup_{\frac{x}{2}\leq s\leq x}\big|g^{*(nk+j)\prime}(s)\big|\big|g^{*k}\big(\frac{x}{2}\big)\big|+\sup_{\frac{x}{2}\leq s\leq x}\big|g^{*k\prime}(s)\big|\big|g^{*(nk+j)}\big(\frac{x}{2}\big)\big|\\
&\leq& n(k+j)Q(x)q^{n-1}\big|g'\big(\frac{x}{2^{k+j+n+1}}\big)\big|q+k\big|g\big(\frac{x}{2}\big)\big|^{k-1}\big|g'\big(\frac{x}{2^{k}}\big)\big|q^{n}\\ 
&\leq&(n+1)(k+j)Q(x) q^{n}\big|g'\big(\frac{x}{2^{k+j+n+1}}\big)\big|,
\end{eqnarray*}
where for the second inequality we have used the fact $|g^{*n\prime}(x)|=(-1)^{n+1}g^{*n\prime}(x)$ (cf. Lemma \ref{Properties}) to do the integration, and in the third inequality \eqref{EEstimate1}, the induction hypothesis and $|g^{*n k+j}(\frac{x}{2})|\leq |g^{*k}(\frac{x}{2})|^{n-1}|g^{*(k+j)}(\frac{x}{2})|\leq q^{n}$, for $x\leq 2b$, have been invoked. Finally, we recall that there is  a $C(2b)$ such that $|g'(\frac{x}{2^{k+j+n}})|\leq C(2b)2^{k+j+n}/x$ holds on $x\in(0,2b]$ to deduce that 
\[\sum_{n\geq 1}|g^{*(kn+j)\prime}(x)|\leq \frac{1}{\epsilon}\sum_{n\geq 1}(n+1)Q(2b)C(2b) q^{n}2^{k+j+n}<\infty,\]
for each $x\in (\epsilon,2b]$  (taking account of the fact that  $|g'(0+)|$ can be infinity) and $j\leq k-1$. Hence we conclude that $\sum_{n\geq 1}g^{*n\prime}(x) = \sum_{j=0}^{k-1}\sum_{n\geq 0}g^{*(kn+j)\prime}(x)$ is uniformly convergent on $(\epsilon,2b]$ and by the dominated convergence theorem it follows that $\phi'(x)$ exists and equals $\sum_{n\geq 1}g^{*n\prime}(x)$. Since this procedure can be repeated forever and $\epsilon$ is arbitrary, we are in position to conclude that $\phi'(x)=\sum_{n\geq 1}g^{*n\prime}(x)$ on $(0,\infty)$.

We conclude our proof by addressing  the uniqueness of the solution of \eqref{Renewal} is proved. First note that it is straightforward to deduce that $\phi$ solves \eqref{Renewal}. Indeed, thanks to Fubini's theorem we may write 
\[
 \phi(x) = 1 + \sum_{n\geq 0}\int_0^x g^{*n}(x-y)g'(y)dy 
= \phi(x) = 1 + \int_0^x \phi(x-y)g'(y)dy,
\]
noting in particular that the convolution on the right hand side is well defined.
Next assume that there are two bounded solutions on $[0,a]$. Denote them by $v_{1}$ and $v_{2}$. Then $v=v_{1}-v_{2}=v*g$ and in general $v=v*g^{*n}$, for each $n$. The simple estimate
\[|v(x)|\leq \sup_{0\leq s\leq x}|v(s)||g^{*n}(x)|\]
implies that $v(x)\equiv 0$ since $\lim_{n\to\infty}|g^{*n}(x)|=0$, for each $x$ on account of the convergence of the sum which defines $\phi(x)$. Therefore we conclude that $\phi(x)=f(x)$ is the unique solution of \eqref{Renewal}, which is bounded on any interval $[0,a]$. 
\end{proof}

\bigskip

\begin{proof}[Proof of Theorem \ref{Renewal}]
We start by showing the uniform and absolute convergence of $\sum_{n\geq 0}g^{*n}(x)$. First note that  $|g'(x)|\leq Cx^{-\alpha}$, for some $\alpha\in (0,1)$, and the functions $g^{*n}$ are well defined. Also the following estimate is immediate by a simple recursion \begin{equation}\label{Estimate1}
|g^{*n\prime}(x)|\leq C_{n}(a)x^{n-1-n\alpha} 
\end{equation} 
for each $n$ and each interval $(0,a)$.
Indeed fix an interval $(0,a)$ and assume that $x\in(0,a)$. We check using the integrability of $|g'(x)|$ about $0$ and $|g'(x)|\leq Cx^{-\alpha}$ that
\[g^{*2\prime}(x)=\frac{\rm d}{{\rm d}x}\int_{0}^{x}g(x-y)g'(y){\rm d}y=\int_{0}^{x}g'(x-y)g'(y){\rm d}y\] and
using the property of $g'$ we get
\[|g^{*2\prime}(x)|\leq C^{2}\int_{0}^{x}\frac{1}{(x-y)^{\alpha}y^{\alpha}}{\rm d}y=C_{2}(a)x^{1-2\alpha}\]
 Therefore we conclude that $g^{*2\prime}$ is continuous and integrable around $0$. This allows us by simple recursion and induction to deduce that in general $g^{*n\prime}(x)=\int_{0}^{x}g^{*(n-1)\prime}(x-y)g'(y){\rm d}y$ is continuous and \eqref{Estimate1} follows.
 
Taking into account that $\alpha<1$ and \eqref{Estimate1}, we see that eventually there is $k$ such that, for each $n\geq k$, $\lim_{x\to 0} g^{*n\prime}(x)=0$. This fact and \eqref{Estimate}, imply that
\[|g^{*(k+l)}(x)|=|\int_{0}^{x}g^{*l}(x-y)g^{*k\prime}(y){\rm d}y|\leq \sup_{y\leq x}|g^{*k\prime}(y)||g(x)|^{l}x\leq D(a)S(a)x,\]
for each $x\in(0,a)$, any $l=0,..,k-1$ with $D(a)=\sup_{y\leq a}|g^{*k\prime}(y)|$ and $S(a)=\max\{l\leq k-1: |g(a)|^{l}\}$. Finally we deduce by induction that, for each $n\geq 1$,
\[|g^{*(nk+l)}(x)|=|\int_{0}^{x}g^{*((n-1)k+l)}(x-y)g^{*k\prime}(y){\rm d}y|\leq S(a)D^{n}(a)\frac{x^{n}}{n!}.\] 
As in the proof of Theorem \ref{Sub}, This is enough to show that $\sum_{n\geq0}g^{*n}(x)$ is uniformly and absolutely convergent on any interval $(0,a)$ and moreover 
\begin{equation}\label{f}
f(x)=\sum_{n\geq0}g^{*n}(x).
\end{equation} 
In addition, a similar argument to the proof of Theorem \ref{Sub} shows that this is also the unique solution in the class of solutions which are bounded on bounded intervals.

Next we move on to the higher order smoothness properties. To this end, assume that $g\in C^{l+1}(0,\infty)$. From \eqref{f} and the uniform and absolute convergence of the series we conclude that if $g\in C(0,\infty)$ then $f\in C(0,\infty)$ and we proceed by induction to deal with higher derivatives.  The induction hyposthesis we shall use is that $g\in C^{l}(0,\infty)$ implies that $f\in C^{r}(0,\infty)$,for all $r\leq l$,  ${\rm d}^{r}f(x)/{\rm d}x^{r}=\sum_{n\geq 0}{\rm d}^{r}g^{*n}(x)/{\rm d}x^{r}$ and each of these series is uniformly and absolutely convergent on any interval $(\epsilon,a)$. Next take an arbitrary interval $(2\epsilon,a)$, and choose $k$ using \eqref{Estimate1} such that $\lim_{x\to0}g^{*k\prime}(x)=0$ on $(0,a)$, and write, for any $x\in (2\epsilon,a)$, 
\begin{eqnarray}
\lefteqn{\frac{{\rm d}^{l+1}}{{\rm d}x^{l+1}}g^{*(n+2k)}(x)}&&\notag\\
&=&\frac{{\rm d}^{l}}{{\rm d}x^{l}}\int_{0}^{x}g^{*n\prime}(x-y)g^{*(2k)\prime}(y){\rm d}y\notag\\
&=&\int_{0}^{\epsilon}\frac{{\rm d}^{l+1}}{{\rm d}x^{l+1}}g^{*n}(x-y)g^{*(2k)\prime}(y){\rm d}y+\int_{0}^{x-\epsilon}g^{*n\prime}(y)\frac{{\rm d}^{l+1}}{{\rm d}x^{l+1}}g^{*(2k)}(x-y){\rm d}y\notag\\
&&+\sum_{m=1}^{l}\frac{{\rm d}^{m}}{{\rm d}x^{m}}g^{*n}(\epsilon)\frac{{\rm d}^{l+1-m}}{{\rm d}x^{l+1-m}}g^{*(2k)}(x-\epsilon)\notag\\
&=&\int_{0}^{\epsilon}\frac{{\rm d}^{l}}{{\rm d}x^{l}}g^{*n}(x-y)g^{*(2k)\prime\prime}(y){\rm d}y+\int_{0}^{x-\epsilon}g^{*n\prime}(y)\frac{{\rm d}^{l+1}}{{\rm d}x^{l+1}}g^{*(2k)}(x-y){\rm d}y\notag\\
&&+\sum_{m=2}^{l}\frac{{\rm d}^{m}}{{\rm d}x^{m}}g^{*n}(x-\epsilon)\frac{{\rm d}^{l+1-m}}{{\rm d}x^{l+1-m}}g^{*(2k)}(\epsilon).
\label{sum}
\end{eqnarray}
where we have temporarily assumed that $\lim_{x\to 0}g^{*(2k)\prime\prime}(x)=0$ in order for the third equality to be valid. Therefore we assume the latter and we will show it later. Next, we sum the left and right hand side of (\ref{sum}) with a view to establishing the induction hypothesis, i.e. that  
\begin{equation}
\sum_{n\geq 1}\frac{{\rm d}^{l+1}}{{\rm d}x^{l+1}}g^{*n}(x)=\sum_{j\leq 2k}\frac{{\rm d}^{l+1}}{{\rm d}x^{l+1}}g^{*n}(x)+\sum_{n\geq 1}\frac{{\rm d}^{l+1}}{{\rm d}x^{l+1}}g^{*(n+2k)}(x)
\label{twobits}
\end{equation} 
is uniformly and absolutely convergent on any interval $(\epsilon,a)$, which allows us to interchange differentiation and summation.

 Note that the derivatives in the finite sum on the right hand side of (\ref{twobits})  are well defined since one may follow a similar procedure to the justification given above for the continuous $(l+1)$-th order  derivatives of $g^{*(n+2k)}(x)$. Indeed in the aforementioned argument,  the role of $g^{*2k}$ is played instead by $g'$. Uniform and absolute convergence on any interval $(\epsilon, a)$ for the finite sum in (\ref{twobits}) thus follows.  

 Next we turn to the second sum on the right hand side of (\ref{twobits}). Note that the first term on the right hand side of (\ref{sum}) is absolutely and uniformly summable with respect to $n$ on $x\in (2\epsilon,a)$ due to the induction hypothesis, i.e. $g\in C^{l}(0,\infty)$ implies that $f\in C^{l}(0,\infty)$, ${\rm d}^{l}f(x)/{\rm d}x^{l}=\sum_{n\geq 0}{\rm d}^{l}(g^{*n}(x))/{\rm d}x^{l}$ and the series is uniformly and absolutely convergent on any interval $(\epsilon,a)$. Also, for the same reason, the third term on the right hand side of (\ref{sum}) is uniformly and absolutely summable with respect to $n$ on $(2\epsilon,a)$ and defines a continuous function on $(2\epsilon,a)$. Finally note that $f'(x)=\sum_{n\geq 1}g^{*n\prime}(x)$ is absolutely integrable at zero. The latter is a consequence of the fact that
\begin{equation}\label{NEW} g^{*(n+2k)\prime}(x)=\int_{0}^{x}g^{*n}(x-y)g^{*(2k)\prime\prime}(y){\rm d}y,\end{equation}
the fact that
\begin{eqnarray*}
\sum_{n\geq 0} |g^{*(n+2k)\prime}(x)|&\leq& \int_{0}^{x}\sum_{n\geq 1}|g^{*n}(x-y)||g^{*(2k)\prime\prime}(y)|{\rm d}y\\
&\leq& \sum_{n\geq 1}|g^{*n}(x)|\sup_{y\leq x}|g^{*(2k)\prime\prime}(y)|x
\end{eqnarray*}
and the fact that $\sum_{n\geq 1}|g^{*n}(x)|$ is uniformly convegent on $(0,a)$, and $\lim_{x\to 0}g^{*(2k)\prime\prime}(x)=0$. We are thus able to conclude that the series
\[\sum_{n\geq 1}\int_{0}^{x-\epsilon}g^{*n\prime}(y)\frac{{\rm d}^{l+1}}{{\rm d}x^{l+1}}g^{*(2k)}(x-y){\rm d}y,\]
and hence the sum with respect to $n$ of the second term on the right hand side of (\ref{sum}),
is uniformly and absolutely convergent on $(2\epsilon,a)$.
Thus up to showing that $\lim_{x\to 0}g^{*(2k)\prime\prime}(x)=0$ , we have completed the proof of the claim that $g\in C^{l+1}(0,\infty)$ implies that $f\in C^{l+1}(0,\infty)$.

We thus turn to showing that $\lim_{x\to 0}g^{*(2k)\prime\prime}(x)=0$. Write
\[g^{*2\prime\prime}(x)=2\big(g'\big(\frac{x}{2}\big)\big)^{2}+2\int_{0}^{\frac{x}{2}}g''(x-y)g'(y){\rm d}y\]
and use from the assumption of the theorem that $g''(x)\leq C(a)/x^{\alpha+1}$ and $|g'(x)|\leq C(a)/x^{\alpha}$ on $(0,a)$ to estimate, for each $x\in (0,a)$,
\[|g^{*2\prime\prime}(x)|\leq \frac{2C^{2}(a)}{x^{2\alpha}}+\frac{2C^{2}(a)}{x^{2\alpha}}\int_{0}^{\frac{1}{2}}\frac{1}{(1-v)^{1+\alpha}}\frac{1}{v^{\alpha}}dv\leq C_{2}(a)x^{-2\alpha}.\] 
Using this and \eqref{Estimate1}, it is trivial to show on $(0,a)$ that 
\begin{equation}\label{NEW1} |g^{*(k)\prime\prime}(x)|\leq C_{k}(a)x^{k-2-k\alpha}.\end{equation}
By choosing $k$ such that $k-1-k\alpha>1/2$ we have $\lim_{x\to 0}g^{*(2k)\prime\prime}(x)=0$. 

Since, in the above reasoning, $\epsilon$ may be taken arbitrarily small, we may finally claim the validity of the the induction hypothesis at the next iteration; namely that $g \in C^{l+2}(0,\infty)$ implies that $f \in C^{l+2}(0,\infty)$.

\bigskip

For the converse of the latter conclusion, assume that $f \in C^{l+2}(0,\infty)$. We know that $g$ is at least $C^{2}(0,\infty)$ thanks to the conditions of the theorem and therefore from the proof so far we see that $f\in C^{2}(0,\infty)$. Next we proceed to check that $|f'(x)|\leq D(a)/x^{\alpha}$ and $|f''(x)|\leq D(a)/x^{\alpha+1}$ on any interval $(0,a)$. The first is follows from \eqref{NEW} together with an application of \eqref{Estimate1} since 
\[f'(x)=\sum_{j\leq 2k}g^{*j\prime}(x)+\sum_{j> 2k}g^{*j\prime}(x)=\sum_{j\leq 2k}g^{*j\prime}(x)+\int_{0}^{x}f(x-y)g^{*(2k)}(y){\rm d}y.\]
We also have that $|f''(x)|\leq D(a)/x^{\alpha+1}$ is a consequence of $|f'(x-y)|\leq D(a)/(x-y)^{\alpha}$,
\[f'(x)=\sum_{j\leq 2k}g^{*j\prime}(x)+\sum_{j> 2k}g^{*j\prime}(x)=\sum_{j\leq 2k}g^{*j\prime}(x)+\int_{0}^{x}f'(x-y)g^{*(2k)}(y){\rm d}y\]
and \eqref{NEW1}.

Now turning to the equation
$f=1+f*g$ we obtain by rearrangement $g=(f-1)+g*(1-f)$ and therefore
\[g=f-1+\sum_{n\geq 1}(f-1)*(1-f)^{*n}.\]
Hence the arguments used for studying smoothness of $f$ given smoothness of $g$, via the equation $f=1+f*g$, are applicable to the series above and we can conclude that $f \in C^{l+2}(0,\infty)$ implies that $g \in C^{l+2}(0,\infty)$. 
\end{proof}

\section{General remark on scale function proofs}\label{generalremark}

Before we proceed to the proofs of Theorems \ref{I}, \ref{II} and \ref{III}, we make a general remark which applies to all three of the latter. Specifically we note that it suffices to prove these three theorems for the case that $q=0$ and  $\psi'(0+)\geq 0$. In the case that $\psi'(0+)<0$ and/or $q\geq 0$, we know from (\ref{reduce}) that smoothness properties of $W^{(q)}$ reduces to smoothness properties of $W_{\Phi(q)}$. Indeed, on account of the fact that $\Phi(q)>0$ (and specifically $\Phi(0)>0$ when $\psi'(0+)<0$ and $q=0$), it follows that 
$\psi_{\Phi(q)}'(0+)=\psi'(\Phi(q))>0$. The smoothness properties of $W_{\Phi(q)}$ would then be covered by the proofs of the smoothness properties of $W$ under the assumption that $\psi(0+)\geq 0$ providing that the L\'evy measure $\Pi_{\Phi(q)}({\rm d}x) = e^{\Phi(q)x}\Pi({\rm d}x)$ simultaneously respects the conditions of the Theorems \ref{I}, \ref{II} and \ref{III}. However, it is clear that this is the case as it is the behaviour of $\Pi_{\Phi(q)}$ in the neighbourhood of the origin which matters.

\section{Proof of Theorem \ref{I}}

We give two proofs of Theorem \ref{I}. The first proof appeals to the just established conclusions regarding renewal measures, taking advantage of the remarks made around (\ref{remark-around}). The second proof gives a more probabilistic approach which takes advantage of the natural connection between scale functions and  the excursion measure $n$. Recall from the previous section that, without loss of generality, we may assume throughout 
that $\psi'(0+)\geq 0$.

\noindent\begin{proof}[First proof] 
Assume that $X$ oscillates (equivalently $\psi'(0+)=0$). Then from \eqref{remark-around}  we have that $W(x)$ is the potential measure of the descending ladder height process. 
Then \eqref{Crossing} holds with $u(x) = W'(x)$, $\delta=\sigma^{2}/2$ and $g(x)=-\int_{0}^{x}\overline{\mu}(y){\rm d}y=-\int_{0}^{x}\overline{\overline{\Pi}}(y){\rm d}y$ where $$\overline{\overline{\Pi}}(x): = \int_x^\infty \overline{\Pi}(y){\rm d}y.$$
On the other hand when $X$ drifts to infinity, that is to say $\psi'(0+)>0$, we may still identify $W$ as the renewal function of the descending ladder height process and moreover, from the Wiener-Hopf factorisation one also easily deduces that 
$W(x)=P(\inf_{s\geq0}X_{s}\geq-x)/\psi'(0+)$, see for example Chapter 8 of Kyprianou (2006). The probability that the descending ladder height subordintor crosses level $x>0$ is now equal to $1- P(\inf_{s\geq0}X_{s}\geq-x)$. Therefore \eqref{Crossing} is slightly transformed to
\begin{equation}\label{Drift}
\frac{\sigma^{2}}{2}W'(x)+\int_{0}^{x}W'(x-y)\overline{\overline{\Pi}}(y){\rm d}y=1-\psi'(0+)\int_{0}^{x}W'(y){\rm d}y.
\end{equation}
Putting $f(x)=\sigma^{2}W'(x)/2$ and
$g(x)=-2 \sigma^{-2}\int_{0}^{x}\overline{\overline{\Pi}}(y){\rm d}y-2\psi'(0+)\sigma^{-2}x$
in \eqref{Drift} we get 
$f=1+f*g$.
Note that the latter also agrees with the case $\psi'(0+)=0$.

In either of the two cases (oscillating or drifting to infinity) we may now  apply Theorem \ref{Sub} in a similar way to the way it was used in the proof of Corollary \ref{IV} to deduce the required result.
\end{proof}

\bigskip

\begin{proof}[Second proof]
Recall from the introduction that $W^{\prime}(x)=n(\overline\epsilon \geq
x )W(x)$ and hence if the limits exist, then
\begin{eqnarray}
W^{\prime\prime}(x+) &=& \lim_{\varepsilon\downarrow 0}\frac{%
W^{\prime}(x+\varepsilon) - W^{\prime}(x)}{\varepsilon}  \notag \\
&=&-\lim_{\varepsilon\downarrow 0} \frac{ n(\overline\epsilon \in
[x,x+\varepsilon) ) W(x )- n(\overline\epsilon \geq x + \varepsilon) (W(x +
\varepsilon)- W(x))}{\varepsilon}  \notag \\
&=&-\lim_{\varepsilon\downarrow 0}\frac{n(\overline\epsilon \in
[x,x+\varepsilon) )}{\varepsilon} W(x) + n(\overline\epsilon \geq
x)W^{\prime}(x+ ).  \label{first step}
\end{eqnarray}

To show that the limit on the right hand side of (\ref{first step}) exists,
define $\sigma_x = \inf\{t>0 : \epsilon_t \geq x\}$ and $\mathcal{G}_t =
\sigma(\epsilon_s : s\leq t)$. With the help of the Strong Markov Property
for the excursion process we may write
\begin{eqnarray}
n(\overline\epsilon \in [x,x+\varepsilon) )&=&n(\overline\epsilon \geq x,
\epsilon_{\sigma_x}< x+\varepsilon, \overline\epsilon < x+\varepsilon) 
\notag \\
&=&n( \mathbf{1}_{(\overline\epsilon \geq x, \epsilon_{\sigma_x}<
x+\varepsilon)}n(\overline\epsilon < x+\varepsilon| \mathcal{G}_{\sigma_x}))
\notag \\
&=&n( \mathbf{1}_{(\overline\epsilon \geq x, \epsilon_{\sigma_x}<
x+\varepsilon)}P_{-\epsilon_{\sigma_x}}(\tau^+_0 <
\tau^-_{-(x+\varepsilon)}))  \notag \\
&=& n(\overline\epsilon \geq x, \epsilon_{\sigma_x}= x)\frac{W(\varepsilon)}{%
W(x+\varepsilon)}  \notag \\
&& + n( \mathbf{1}_{(\overline\epsilon \geq x,x< \epsilon_{\sigma_x}<
x+\varepsilon)}\frac{W(x+\varepsilon - \epsilon_{\sigma_x})}{W(x+\varepsilon)%
}).  \label{see creep}
\end{eqnarray}

A L\'evy process may creep both upwards
and downwards if and only if it has a Gaussian component (see Bertoin (1996)
pp175). Since a spectrally negative L\'evy process always creeps upwards
then we know that it creeps downwards if and only if it has a Gaussian
component. Hence we may say that for the case at hand $%
\{\epsilon_{\sigma_x}=x\}$ has non-zero $n$-measure.

Next note that the  parameter $\sigma>0$ is identified via the limit 
\begin{equation}
\lim_{\theta\uparrow\infty}\frac{\psi(\theta)}{\theta^2} = \sigma^2 /2.
\label{s/2}
\end{equation}
It is also known from Bertoin (1996) that $W(0+)=0$ as a consequence of the fact
that the lower half line is regular for 0 for $X$.






From (\ref{LapT}) and the known fact that in this case $W(0+)=0$ 
\begin{equation*}
\int_0^\infty e^{-\theta x}W^{\prime}(x)\mathrm{d}x =\theta\int_0^\infty
e^{-\theta x}W(x)\mathrm{d}x={\frac{\theta}{\psi(\theta)}}.
\end{equation*}
Hence using (\ref{s/2}) 
\begin{equation}  \label{W-dash0}
W^{\prime}(0+)=\lim_{\theta\to\infty}\theta \int_0^\infty e^{-\theta
x}W^{\prime}(x)\mathrm{d}x =\lim_{\theta\to\infty}{\frac{\theta^2}{%
\psi(\theta)}}={\frac{2}{\sigma^2}}.
\end{equation}















Using the last equality together with the monotonicity of $W$ we have that
\begin{eqnarray}
\lefteqn{\limsup_{\varepsilon\downarrow 0}\frac{1}{\varepsilon}n( \mathbf{1}%
_{(\overline\epsilon \geq x, \epsilon_{\sigma_x}\in(x, x+\varepsilon))}\frac{%
W(x+\varepsilon - \epsilon_{\sigma_x})}{W(x+\varepsilon)})}  \notag \\
&\leq& \frac{1}{W(x)} \limsup_{\varepsilon\downarrow 0}n(\overline\epsilon
\geq x, \epsilon_{\sigma_x}\in(x,x+\varepsilon))\frac{W(\varepsilon)}{%
\varepsilon}  \label{non-creep} \\
& =&0.  \notag
\end{eqnarray}

In conclusion, $W^{\prime\prime}(x+)$ exists and 
\begin{equation*}
W^{\prime\prime}(x+)=-W^{\prime}(0+)n(\overline\epsilon \geq x,
\epsilon_{\sigma_x}= x)+n(\overline\epsilon \geq x)W^{\prime}(x),
\end{equation*}
that is to say,
\begin{equation}
n(\overline\epsilon \geq x,
\epsilon_{\sigma_x}= x)=\frac{\sigma ^{2}}{2}\left\{\frac{W^{\prime
 }(x)^{2}}{W(x)}- W^{\prime \prime }(x+)\right\}
 \label{W-doubledash}
\end{equation}

 We shall now show that there exists a left second derivative $W^{\prime
 \prime }(x-)$ which is also equal to the right hand side of (\ref{W-doubledash}) thus
 completing the proof. We recall from Doney (2005) 
 that $A\in \mathcal{F}_{t}$
\begin{equation*}
n(A,t<\zeta )=\lim_{y\downarrow 0}\frac{\widehat{ P}_{y}(A,t<\tau
_{0}^{-})}{y}.
\end{equation*}
From this we may write 
 \begin{equation}
 n(\overline{\epsilon }\geq x,\epsilon _{\sigma _{x}}=x)=\lim_{y\downarrow 0}%
  \frac{\widehat{ P}_{y}(X_{\tau _{x}^{+}}=x;\tau _{x}^{+}<\tau _{0}^{-})}{y}%
   \label{radlimit}
 \end{equation}%
 where $\widehat{ P}_y$ is the law of $-X$ when issued from $y$.
 From general potential theory (cf. Kesten (1969)) we also know that 
 \begin{equation*}
 \widehat{ P}_{y}(X_{\tau _{x}^{+}}=x;\tau _{x}^{+}<\infty )=\frac{\sigma
 ^{2}}{2} W^{\prime }(x-y).
 \end{equation*}%
 See also the review paper of Pistorius (2005) for a recent proof of this
 fact. Using the Strong Markov Property, the above formula and the fact that $X$ creeps upwards it is straightforward to deduce that 
\[
 \widehat{ P}_{y}(X_{\tau _{x}^{+}}=x;\tau _{0}^{-}< \tau _{x}^{+})
=\frac{W(x-y)}{W(x)} \times \frac{\sigma ^{2}}{2}W^{\prime }(x)
\]
and hence
 \[
 \widehat{ P}_{y}(X_{\tau _{x}^{+}}=x;\tau _{x}^{+}<\tau _{0}^{-}) 
 =\frac{\sigma ^{2}}{2}\left\{ W^{\prime }(x-y)-\frac{W(x-y)}{W(x)}%
 W^{\prime }(x)\right\} .
 \]%
 Returning to (\ref{radlimit}) we compute 
 \begin{eqnarray*}
 n(\overline{\epsilon }\geq x,\epsilon _{\sigma _{x}}=x) &=&\lim_{y\downarrow
 0}-\frac{\sigma ^{2}}{2}\left\{ \frac{W^{\prime }(x)-W^{\prime }(x-y)}{y}+%
 \frac{W^{\prime }(x)}{W(x)}\frac{W(x)-W(x-y)}{y}\right\}  \\
 &=&\frac{\sigma ^{2}}{2}\left\{\frac{W^{\prime
 }(x)^{2}}{W(x)}- W^{\prime \prime }(x-)\right\} 
 \end{eqnarray*}%
 Note that the existence of $W^{\prime \prime }(x-)$ is guarenteed in light
 of (\ref{radlimit}). Taking account of (\ref{W-dash}) and (\ref{W-dash0}) we see
 that the final equality above is identical to (\ref{W-doubledash}) but with $%
 W^{\prime \prime }(x+)$ replaced by $W^{\prime \prime }(x-)$.

 Thus far we have shown that a second derivative exists everywhere. To complete the proof, we need to show that this second derivative is continuous. To do this, it suffices to show that $n(\overline{\epsilon }\geq x,\epsilon _{\sigma _{x}}=x)$ is continuous. To this end note that
\begin{eqnarray*}
 n(\overline{\epsilon }\geq x,\epsilon _{\sigma _{x}}=x)  &=&n(\overline{\epsilon }\geq x)-n(\overline{\epsilon }\geq x,\epsilon _{\sigma _{x}}>x)\\
&=& \frac{W'(x)}{W(x)} - \int_0^x \left\{W'(x-y) - \frac{W'(x)}{W(x)}W(x-y)\right\}\overline{\Pi}(y){\rm d}y
\end{eqnarray*}
where the second term on the right hand side is taken from Lemma 2.1 of Kyprianou and Zhou (2009). The required continuity is now immediate from the right hand side above.
\end{proof}





\section{Proof of Theorems \ref{II}, \ref{III}} 
For both proofs, following the remarks in Section \ref{generalremark} we again assume without loss of generality that $\psi'(0+)\geq 0$.

\begin{proof}[Proof of Theorem \ref{II}]
Recall from the proof of Theorem \ref{I} the identity (\ref{Drift}) and the choices $f(x)=\sigma^{2}W'(x)/2$ and
$g(x)=-2 \sigma^{-2}\int_{0}^{x}\overline{\overline{\Pi}}(y){\rm d}y-2\psi'(0+)\sigma^{-2}x$ which transform it to the renewal equation $f= 1+ f*g.$

 Our objective is to recover the required result from Theorem \ref{Renewal}. In order to do this, we need to show that $|g'(x)|\leq C(a)x^{-\alpha}$ and $|g''(x)|C(a)x^{-\alpha-1}$ for some $\alpha<1$ on any interval $(0,a)$.  
The assumption on the Blumenthal-Getoor index which implies that there exists a $\vartheta\in(1,2)$ such that
$\int_{-1}^{0}|x|^{\vartheta}\Pi(dx)<\infty.$
Therefore
\[|\epsilon|^{\vartheta}\big(\overline{\Pi}(\epsilon)-\overline{\Pi}(1)\big)\leq\int_{-1}^{-\epsilon}|x|^{\vartheta}\Pi(dx)<\infty\]
and it follows that $|\epsilon|^{\vartheta}\overline{\Pi}(\epsilon)\leq C$, for each $\epsilon\leq 1$. This and the fact that $\lim_{x\to \infty}\overline{\Pi}(x)=0$ show that $|g''(x)|\leq C(a)x^{-\vartheta}$, for each interval $(0,a)$. 

Next, from the definition of $\overline{\overline{\Pi}}(x)$ and the bound just obtained for $\overline{\Pi}(x)$, we  get for $x\leq 1$,
\[
\overline{\overline{\Pi}}(x)\leq \int_{x}^{1}\frac{C(1)}{y^{\vartheta}}{\rm d}y+\overline{\Pi}(1)\leq \frac{D(1)}{x^{\vartheta-1}}
\]
for an approprite constant $D(1)>0$.
Since $\lim_{x\to \infty}\overline{\overline{\Pi}}(x)=0$ we conclude that $\overline{\overline{\Pi}}(x)\leq C(a)x^{1-\vartheta}$ on every interval $(0,a)$ where $C(a)>0$ plays the role of a generic appropriate constant.
Hence Theorem \ref{Renewal} is applicable and the proof is complete.
\end{proof}

\bigskip

\begin{proof}[Proof of Theorem \ref{III}] Suppose as usual that $\psi'(0+)\geq 0$.
In this case it is well known that the Laplace exponent can  be rewritten in the form 
\begin{eqnarray}
\psi(\theta) 
&=& \mathrm{c}\theta - \theta \int_0^\infty e^{-\theta x}\overline\Pi(x)%
\mathrm{d}x \text{ for }\theta\geq 0  \label{bvLE}
\end{eqnarray}
where necessarily $\mathrm{c}>0$ and $\int_{(-\infty,0)}(1\wedge
|x|)\Pi(\d x)<\infty$; see Chapter VII of Bertoin (1996). On account of the fact that $\psi'(0+)\geq 0$,  it follows  from (\ref{bvLE})  that
\begin{equation*}
\mathrm{c}\geq \int_0^\infty \overline\Pi(x)\mathrm{d}x>   \int_0^\infty e^{-\theta x} \overline\Pi(x)\mathrm{d}x 
\end{equation*}
for all $\theta>0$.
Using the above inequality to justify convergence, we may write  
\begin{equation}
\frac{\theta}{\psi(\theta)} =\frac{1}{\mathrm{c}}\left( \frac{1}{1- \frac{1}{%
\mathrm{c}}\int_0^\infty  e^{-\theta x}\overline\Pi(x)\mathrm{d}x} \right)
= \frac{1}{\mathrm{c}}\sum_{n\geq 0} \frac{1}{\mathrm{c}^n}\left(
\int_0^\infty  e^{-\theta x}\overline\Pi(x)\mathrm{d}x \right)^n.
\label{series}
\end{equation}
From (\ref{remark-around}) 
we deduce that 
\begin{equation}
W(x) = \frac{1}{\mathrm{c}}\sum_{n\geq 0}g^{*n}(x), 
\label{spectral}
\end{equation}
where
\begin{equation*}
g( x) =\frac{1}{\mathrm{c}} \int_0^x\overline\Pi(x)\mathrm{d}x. 
\end{equation*}
Taking $f(x) = {\rm c}W(x)$ we see that we are again reduced to studying the equation $f = 1 + f*g$.   The proof now follows as a direct consequence of Theorem \ref{Renewal} with the conditions on $g$ following as a straightforward consequence of the fact  that $g''(x) = {\rm c}^{-1}\pi(x)$ together with the assumption on the latter density.
\end{proof}

\section*{Acknowledgments} 
We gratefully appreciate extensive discussions with Renming Song and Ron Doney, both of whom brought a number of important mathematical points to our attention. An earlier version of this paper has been cited as Chan and Kyprianou (2006). AEK acknowledges the support of EPSRC grant number EP/E047025/1.

\end{document}